\newtheorem{theorem}{Theorem}
\newtheorem{lemma}{Lemma}
\newtheorem{proposition}{Proposition}
\newtheorem{corollary}{Corollary}
\newcommand{\CC}{\mathbb{C}}
\newcommand{\ZZ}{\mathbb{Z}}
\newcommand{\RR}{\mathbb{R}}
\newcommand{\NN}{\mathbb{N}}
\newcommand{\one}{\mathbf{1}}
\title{Decoupling and near-optimal restriction estimates for Cantor sets} 
\author{Izabella {\L}aba and Hong Wang}
\date{July 27, 2016}
\begin{document}

\begin{abstract}
For any $\alpha\in(0,d)$, we construct Cantor sets in $\RR^d$ of Hausdorff dimension $\alpha$ such that the
associated natural measure $\mu$ obeys the restriction estimate $\| \widehat{f d\mu} \|_{p} \leq C_p  \| f \|_{L^2(\mu)}$
for all $p>2d/\alpha$. This range is optimal except for the endpoint. This extends the earlier work of Chen-Seeger and Shmerkin-Suomala, where a similar result was obtained by different methods for $\alpha=d/k$ with $k\in\NN$. Our proof is based
on the decoupling techniques of Bourgain-Demeter and a theorem of Bourgain on the existence of $\Lambda(p)$ sets.
\end{abstract}

\maketitle

\section{introduction}

We define the Fourier transform 
$$
\widehat{f}(\xi) =  \int e^{-2\pi i x \cdot \xi} f(x)  dx \qquad \forall \xi \in \RR^d.
$$
If $\mu$ is a measure on $\RR^d$, we will also write
$$
\widehat{f d\mu}(\xi) = \int e^{-2\pi i x \cdot \xi} f(x)  d\mu(x) \qquad \forall \xi \in \RR^d.
$$

We are interested in estimates of the form
\begin{align}\label{restriction 1}
\| \widehat{g d\mu} \|_{p} \leq C \| g \|_{L^q(\mu)} \qquad \forall g \in L^q(\mu),
\end{align}
where the constant may depend on the measure $\mu$ and on the exponents $p,q$, but not on $f$.
If $\mu$ is a probability measure, we trivially have $\| \widehat{g d\mu} \|_{\infty } \leq \| g \|_{L^1(d\mu)} \leq \| g \|_{L^q(d\mu)} $,
so that (\ref{restriction 1}) holds with $p=\infty$ and all $q\in[1,\infty]$. In general, it is not possible to say more than that. However, the problem becomes more interesting 
if we restrict attention to specific well-behaved classes of measures.

There is a vast literature on restriction estimates for smooth manifolds (see e.g. \cite{stein-ha}, \cite{tao-survey}, \cite{tao-survey2} for an overview and a selection of references). It is well known that (\ref{restriction 1}) cannot hold
with $p<\infty$ (and any $q$)
when $\mu$ is supported on a flat manifold such as a hyperplane. On the other hand, such estimates are possible if $\mu$ is the surface measure on a curved manifold $M$, with the range of exponents $p,q$ depending on the geometry of $M$, in particular on its dimension, smoothness and curvature.

In the model case when $\mu$ is the Lebesgue measure on the sphere $S^{d-1}\subset\RR^d$, the classic Tomas-Stein theorem states that (\ref{restriction 1}) holds with $q=2$ and $p\geq \frac{2d+2}{d-1}$. It was furthermore conjectured by Stein that for $q=\infty$, the range of $p$ could be improved to $p > \frac{2d}{d-1}$; this has been proved for $d=2$, but remains open in higher dimension, with the current best results due to Guth \cite{guth-poly1}, \cite{guth-poly2}.

The conjectured range $p > \frac{2d}{d-1}$ for the sphere, if true, would be the best possible. This follows by letting $f\equiv 1$ and using the well known stationary phase asymptotics for $\widehat{d\mu}$. The range of $p$ in the Tomas-Stein theorem is also known to be optimal. Here, the sharpness example is provided by the Knapp construction where $f$ is the characteristic function of a small spherical cap of diameter $\delta\to 0$.

We are interested in the case when $\mu$ is a fractal measure on $\RR^d$, singular with respect to Lebesgue. Here, again, additional assumptions are necessary to make nontrivial estimates of the form (\ref{restriction 1}) possible. For example, if $\mu$ is the natural self-similar measure on the Cantor ternary set, an easy calculation shows that (\ref{restriction 1}) cannot hold for any $p<\infty$. However, if we assume that $\mu$ obeys an additional Fourier decay condition, then the following result is known. Here and below, we use $B(x,r)$ to denote the closed ball of radius $r$ centered at $x$.

\begin{theorem}\label{mm-theorem}
Let $\mu$ be a Borel probability measure on $\RR^d$. Assume that there are $\alpha, \beta \in (0,d)$ and $C_1,C_2\geq 0$ such that 
\begin{equation}\label{A}
 \mu(B(x,r)) \leq C_1  r^{\alpha} \quad \forall x \in \RR^d, r > 0,
\end{equation}
\begin{equation}\label{B}
 |\widehat{\mu}(\xi)| \leq C_2 (1+|\xi|)^{-\beta/2} \quad \forall \xi \in \RR^d
\end{equation}
Then for all $p \geq (4d-4\alpha+2\beta)/\beta$, the estimate (\ref{restriction 1}) holds with 
$q=2$.
\end{theorem}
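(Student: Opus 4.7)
The plan is to apply Stein's $TT^*$ argument, adapted to the fractal setting in the manner of Mockenhaupt and Mitsis. By duality, (\ref{restriction 1}) with $q=2$ is equivalent to the assertion that the convolution operator $Tf := f * \widehat{\mu}$ is bounded from $L^{p'}(\RR^d)$ to $L^p(\RR^d)$: if $Sg := \widehat{g\,d\mu}$, then a direct computation gives $SS^* f = f * \widehat{\mu}$, so that $\|S\|^2_{L^2(\mu) \to L^p} = \|SS^*\|_{L^{p'} \to L^p}$.

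Next, I would decompose the convolution kernel $\widehat{\mu}$ dyadically in physical space. Let $\{\phi_j\}_{j \geq 0}$ be a smooth partition of unity on $\RR^d$ with $\phi_0$ supported in $|x| \leq 2$ and $\phi_j$ supported in $|x| \sim 2^j$ for $j \geq 1$, and set $K_j := \widehat{\mu}\,\phi_j$ and $T_j f := f * K_j$. The Fourier decay hypothesis (\ref{B}) gives $\|K_j\|_\infty \leq C 2^{-j\beta/2}$, hence the $L^1 \to L^\infty$ bound $\|T_j\|_{1 \to \infty} \leq C 2^{-j\beta/2}$. For the $L^2 \to L^2$ bound, Plancherel gives $\|T_j\|_{2 \to 2} = \|\widehat{K_j}\|_\infty$; writing $\widehat{K_j}$ as a convolution of $\mu$ (up to reflection) with the smoothing kernel $\widehat{\phi_j}$, which is essentially an $L^1$-normalized bump at scale $2^{-j}$ with $L^\infty$ norm at most $C 2^{jd}$, the Frostman bound (\ref{A}) yields $\|\widehat{K_j}\|_\infty \leq C 2^{j(d-\alpha)}$.

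Riesz--Thorin interpolation between these two bounds then gives $\|T_j\|_{L^{p'} \to L^p} \leq C 2^{j\gamma(p)}$ with $\gamma(p) = -\beta(p-2)/(2p) + 2(d-\alpha)/p = [-\beta(p-2) + 4(d-\alpha)]/(2p)$. A short algebraic check shows $\gamma(p) < 0$ precisely when $p > (4d-4\alpha+2\beta)/\beta$, so in this open range the geometric series $\sum_j \|T_j\|$ converges and produces the desired estimate.

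The main obstacle is the endpoint $p_0 = (4d-4\alpha+2\beta)/\beta$, where each dyadic piece $T_j$ is only uniformly bounded and the naive sum diverges logarithmically. This is handled by replacing Riesz--Thorin with Stein's complex interpolation theorem applied to an analytic family of operators obtained by inserting a complex-power regularization into the multiplier $\widehat{\mu}$, the standard device for closing such endpoint Tomas--Stein estimates (as in the Bak--Seeger refinement of Mockenhaupt's and Mitsis's arguments).
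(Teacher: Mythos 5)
The paper does not prove Theorem~\ref{mm-theorem}; it states it as background and attributes the non-endpoint range to Mockenhaupt and Mitsis and the endpoint to Bak and Seeger. So there is no ``paper's own proof'' to compare against, and your proposal must stand on its own.

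Your argument for the non-endpoint range $p > (4d-4\alpha+2\beta)/\beta$ is the standard Mockenhaupt--Mitsis proof and is essentially correct: the $TT^*$ reduction to $f \mapsto f*\widehat{\mu}$, the physical-space dyadic decomposition, the $L^1\to L^\infty$ bound from Fourier decay, the $L^2\to L^2$ bound from the Frostman condition, and the Riesz--Thorin interpolation with the exponent algebra $\gamma(p) = -\beta/2 + (\beta + 2(d-\alpha))/p$ are all right. One small point to tighten: since $\phi_j$ is compactly supported in physical space, $\widehat{\phi_j}$ is \emph{not} compactly supported, so you cannot directly bound $\|\widetilde{\mu}*\widehat{\phi_j}\|_\infty$ by $\|\widehat{\phi_j}\|_\infty\,\mu(B(\xi,2^{-j}))$. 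You need to invoke the rapid decay $|\widehat{\phi_j}(\eta)|\lesssim_N 2^{jd}(1+2^j|\eta|)^{-N}$, split into dyadic annuli $|\xi-\eta|\sim 2^{k-j}$, apply the Frostman bound on each, and sum the geometric series (taking $N>\alpha$). This is routine but should be said.

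The genuine gap is the endpoint $p = (4d-4\alpha+2\beta)/\beta$, which is precisely the part of the statement that is nontrivial beyond Mockenhaupt--Mitsis. You propose handling it by ``Stein's complex interpolation theorem applied to an analytic family of operators obtained by inserting a complex-power regularization into the multiplier $\widehat{\mu}$.'' This does not work as described, and it is not what Bak--Seeger do. For the sphere, the analytic family $(1-|\xi|^2)_+^z/\Gamma(z+1)$ works because it is intrinsically tied to the geometry of the sphere: varying $\mathrm{Re}\,z$ simultaneously controls the decay of the Fourier transform and the local regularity in a way that makes both boundary lines of the strip favorable. For a general Frostman measure there is no canonical analytic family; inserting $|\xi|^z$ or $(1+|\xi|)^z$ into the multiplier $\widehat{\mu}(\xi)$ does not give you a family for which you can verify useful $L^1\to L^\infty$ and $L^2\to L^2$ bounds on the two boundary lines that recombine to close the endpoint. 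The Bak--Seeger proof instead refines the dyadic decomposition and proves a Lorentz-space estimate $\|\widehat{g\,d\mu}\|_{L^{p,2}} \lesssim \|g\|_{L^2(\mu)}$ at the endpoint, via a Calder\'on--Bourgain type summation/interpolation device that exploits $L^{p,2}$ rather than $L^p$ on the target side; this is a substantively different mechanism from a one-parameter analytic family. As written, your endpoint step is an unsupported assertion, so the proposal establishes only the strict inequality $p>(4d-4\alpha+2\beta)/\beta$, not the full statement.
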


Theorem \ref{mm-theorem} is due to Mockenhaupt \cite{mock} and Mitsis \cite{mit} in the non-endpoint range; the endpoint was
settled later by Bak and Seeger \cite{BS}. In the case $\alpha = \beta = d-1$, this recovers the Tomas-Stein theorem for the sphere.

The range of exponents $p$ in Theorem \ref{mm-theorem} is known to be the best possible in dimension 1, in the sense that for any $0<\alpha\leq \beta <1$, there exists a probability measure $\mu$ on $\RR$, supported on a set of Hausdorff dimension $\alpha$ and obeying (\ref{A}) and ((\ref{B}), such that (\ref{restriction 1}) fails for all 
$p < (4-4\alpha+2\beta)/\beta$, see \cite{HL}, \cite{chen}. The examples are based on a construction due to Hambrook and {\L}aba \cite{HL}: the idea is to modify a random Cantor-type construction so as to embed a lower-dimensional Cantor subset that has much more arithmetic structure than the rest of the set. This can be viewed as an analogue of the Knapp example for fractal sets. A higher-dimensional variant of the construction with $d-1<\alpha\leq\beta<d$ is given in \cite{HL2}.

On the other hand, there exist specific measures on $\RR^d$ for which the range of exponents in (\ref{restriction 1}) can be improved further. If $\mu$ is supported on a set of Hausdorff dimension $\alpha<d$, it is easy to see using energy integrals that (\ref{restriction 1}) cannot hold outside of the range $p\geq 2d/\alpha$, even if $q=\infty$. (See e.g. \cite[Section 1]{HL}; the counterexample is provided by the function $f\equiv 1$.) It turns out that there are measures for which this range is in fact realized, with examples provided by Chen \cite{chen2}, Shmerkin and Suomala \cite{shmerkin-suomala2014}, and Chen and Seeger \cite{chen-seeger2015}. In particular, Chen and Seeger \cite{chen-seeger2015} proved that for $d\geq 1$ and $\alpha=d/k$, where $k\in\NN$, there are measures supported on a set of Hausdorff dimension $\alpha$, obeying (\ref{A}) and (\ref{B}) with $\beta=\alpha$, for which (\ref{restriction 1}) holds for all $p\geq 2d/\alpha$. The proofs are based on regularity of convolutions: assuming that $\alpha=d/k$, the key intermediate step is to prove that the $k$-fold self-convolution $\mu*\dots*\mu$ is absolutely continuous. This method, however,  does not yield optimal exponents when $\alpha\neq d/k$ with integer $k$.

Our main result is as follows. 

\begin{theorem}\label{main-thm}
Let $d\in\NN$ and $0<\alpha<d$. Then there exists a probability measure supported on a subset of $[0,1]^d$ of Hausdorff dimension $\alpha$ such that:
\begin{itemize}
\item for every $0<\gamma<\alpha$, there is a constant $C_1(\gamma)$ such that
\begin{equation}\label{main-e1}
\mu(B(x,r))\leq C_1(\gamma) r^\gamma \ \ \ \forall x\in\RR^d,\ r>0
\end{equation}
\item for every $\beta<\min(\alpha/2,1)$, there is a constant $C_2(\beta)>0$ such that
\begin{equation}\label{main-e2}
|\widehat{\mu}(\xi)| \leq C_2(\beta) (1+|\xi|)^{-\beta}\ \ \ \forall \xi\in\RR^d,
\end{equation}
\item for every $p>2d/\alpha$, we have the estimate
\begin{align}\label{restriction}
\| \widehat{g d\mu} \|_{p} \leq C_3(p) \| g \|_{L^2(\mu)} \quad \forall g \in L^2(\mu).
\end{align}
\end{itemize}
\end{theorem}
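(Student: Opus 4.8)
The plan is to realize $\mu$ as the natural measure on a random self-similar Cantor set whose retained cubes at each scale are chosen, via Bourgain's theorem on the existence of $\Lambda(p)$ sets, so that the \emph{positions} of those cubes form a $\Lambda(p_c)$ set of essentially maximal size, where $p_c:=2d/\alpha$; the restriction estimate is then proved by an induction on the number of Cantor levels, fuelled at each step by a single-scale decoupling inequality of Bourgain--Demeter type. Concretely, fix a rapidly increasing sequence of integers $M_1,M_2,\dots$, put $N_j=\lfloor M_j^{\alpha}\rfloor$, and build nested sets $[0,1]^d=E_0\supset E_1\supset\cdots$, where $E_j$ refines $E_{j-1}$ by partitioning each of its cubes into $M_j^{d}$ subcubes and keeping the $N_j$ of them indexed, inside every parent cube by the same set, by a set $A_j\subseteq\{0,\dots,M_j-1\}^d$. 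By Bourgain's theorem we may take $A_j$ to be a $\Lambda(p_c)$ set with constant $O(1)$ and $|A_j|\gtrsim (M_j^d)^{2/p_c}=M_j^{\alpha}$, chosen at random among such sets so that it is also well distributed at all scales. Let $\mu_J$ be normalized Lebesgue measure on $E_J$ and $\mu=\lim_J\mu_J$; then its support has Hausdorff dimension $\lim_J \log(N_1\cdots N_J)/\log(M_1\cdots M_J)=\alpha$.

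The Frostman bound (\ref{main-e1}) and the Fourier decay (\ref{main-e2}) are the soft part, read off from concentration estimates for the random selection together with a union bound over scales (this is why the $M_j$ are taken large). Since a random retained set is spread out, a ball of radius $(M_1\cdots M_j)^{-1}$ meets $O(1)$ retained cubes of generation $j$, which after handling intermediate radii gives $\mu(B(x,r))\lesssim_\gamma r^{\gamma}$ for all $\gamma<\alpha$. For (\ref{main-e2}), $\widehat{\mu}(\xi)=\prod_{j\ge1}S_j\big(\xi/(M_1\cdots M_j)\big)$ with $S_j(\eta)=N_j^{-1}\sum_{a\in A_j}e^{-2\pi i a\cdot\eta}$, and a net argument shows $\|S_j\|_\infty\lesssim N_j^{-1/2}(\log M_j)^{1/2}$ with high probability; telescoping yields $|\widehat{\mu}(\xi)|\lesssim|\xi|^{-\alpha/2+o(1)}$, so (\ref{main-e2}) holds for every $\beta<\alpha/2$ (the extra truncation at $1$ is an artefact of the scheme). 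Finally, weak convergence $\mu_J\to\mu$, pointwise convergence of the transforms, and Fatou reduce (\ref{restriction}) to the same estimate for each $\mu_J$, uniformly in $J$.

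For the uniform bound we induct on $J$. Let $R_J(p)$ be the largest restriction constant among all stage-$J$ sub-measures occurring in the construction. Because $\mu_{J+1}$ restricted to a top-level cube $Q$ is an affine image (via the similarity $T_Q\colon[0,1]^d\to Q$, with corner $x_Q=a_Q/M_1$, $a_Q\in A_1$) of a stage-$J$ sub-measure, one has
\begin{equation*}
\widehat{g\,\mu_{J+1}}(\xi)=\frac1{N_1}\sum_{Q}e^{-2\pi i\, x_Q\cdot\xi}\;F_Q\big(\xi/M_1\big),\qquad F_Q:=\widehat{(g\circ T_Q)\,\mu_J}.
\end{equation*}
Each $F_Q$ has frequency support in $[0,1]^d$, so after the substitution $\xi=M_1\eta$ the summands become $e^{-2\pi i a_Q\cdot\eta}F_Q(\eta)$ with essentially disjoint frequency supports $a_Q+[0,1]^d$, $a_Q\in A_1$. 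Applying an $\ell^2$-decoupling inequality for $A_1$ into unit cubes, with constant $D(p)$, then the inductive bound $\|F_Q\|_p\le R_J(p)\,\|g\circ T_Q\|_{L^2(\mu_J)}$, and the orthogonality identity $\sum_Q\|g\circ T_Q\|_{L^2(\mu_J)}^2=N_1\|g\|_{L^2(\mu_{J+1})}^2$, one obtains
\begin{equation*}
R_{J+1}(p)\le M_1^{\,d/p}\,N_1^{-1/2}\,D(p)\,R_J(p).
\end{equation*}
So everything reduces to a single-scale statement: at scale $M$, a $\Lambda(p_c)$ set $A\subseteq\{0,\dots,M-1\}^d$ with $|A|\sim M^{\alpha}$ should admit $\ell^2$-decoupling into the unit frequency cubes about its points with constant $D(p)\le C_\varepsilon M^{\,\alpha/2-d/p+\varepsilon}$ for every $p>p_c$. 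Granting this, choose the $M_j$ large enough (depending on $\varepsilon$) that $M_j^{\,d/p}N_j^{-1/2}D(p)\le M_j^{\,d/p-\alpha/2+\varepsilon}<1$ whenever $p>2d/\alpha$; then $R_J(p)$ is bounded (indeed $\to0$) as $J\to\infty$, which is the desired uniform bound.

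The heart of the matter — and the step I expect to be hardest — is this single-scale decoupling for a sparse, additively unstructured frequency set. At the critical exponent $p_c$ the target constant is merely $M^{\varepsilon}$, i.e.\ essentially lossless decoupling, which is genuinely of Bourgain--Demeter strength; for $p>p_c$ only a controlled power of $M$ is wanted, obtainable by interpolating the $p_c$-bound against the trivial endpoints $D(2)=1$ and $D(\infty)\lesssim|A|^{1/2}$. The point of the $\Lambda(p_c)$ hypothesis is that it already forces the \emph{pure} exponential sum $\sum_{a\in A}c_a e^{2\pi i a\cdot x}$ to be as small in $L^{p_c}$ as Plancherel allows; the remaining work is to promote this to a decoupling inequality for \emph{arbitrary} functions adapted to the frequency cubes around $A$, and this is exactly where the Bourgain--Demeter machinery enters: a Bourgain--Demeter-type induction on scales (and, if needed, a broad--narrow/multilinear reduction), with the $\Lambda(p_c)$ estimate playing the role that curvature plays for the paraboloid. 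Once this decoupling inequality is in hand with the stated constant, the bookkeeping above finishes the proof.
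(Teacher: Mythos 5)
Your skeleton is right — Cantor sets with $\Lambda(p_c)$ alphabets at $p_c=2d/\alpha$, single-scale decoupling, induction over levels, randomized translates for Fourier decay — but the induction does not close as written, because of an algebraic error in the per-level factor. Plugging $N\sim M^\alpha$ and the interpolated decoupling bound $D(p)\le C_\varepsilon M^{\alpha/2-d/p+\varepsilon}$ into $M^{d/p}N^{-1/2}D(p)$ gives $M^{d/p}\cdot M^{-\alpha/2}\cdot C_\varepsilon M^{\alpha/2-d/p+\varepsilon}=C_\varepsilon M^\varepsilon$, not $M^{d/p-\alpha/2+\varepsilon}$ as you claim: the gain $d/p-\alpha/2<0$ from $p>p_c$ in the first factor is exactly cancelled by the loss $\alpha/2-d/p$ in $D(p)$. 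This cancellation is not an artefact of your interpolation — a set of size $\sim M^{2d/p_c}$ is strictly too large to be $\Lambda(p)$ for any $p>p_c$ (by the standard size bound on $\Lambda(p)$ sets), so $D(p)=O(1)$ above $p_c$ is not available and the loss $M^{\alpha/2-d/p}$ is forced. Hence the per-level factor is at least a constant $\ge 1$, and $R_J(p)$ grows with $J$. (It certainly cannot tend to $0$: test $g\equiv 1$, use Fatou, and note $\widehat\mu(0)=1$.) What the induction actually yields, working at the fixed exponent $p_c$ where the $\Lambda(p_c)$ property gives a decoupling constant $C(p_c)=O(1)$ uniformly in $M$, is the accumulation $C(p_c)^k$ over $k$ levels — hence only a \emph{localized} restriction estimate $\|\widehat{g\,d\mu}\|_{L^{p_c}([-R,R]^d)}\lesssim_\varepsilon R^\varepsilon\|g\|_{L^2(\mu)}$, and even obtaining that requires taking the branching ratios $n_j$ to grow slowly so that $C(p_c)^k\lesssim_\varepsilon R^\varepsilon$.

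The missing ingredient is an $\varepsilon$-removal argument: the paper invokes a variant of Tao's epsilon-removal lemma, which upgrades the localized $R^\varepsilon$-lossy estimate to a global estimate for every $p>p_c$. This is what actually produces the open range $p>2d/\alpha$ and explains the lost endpoint, and it requires as input the pointwise Fourier decay $|\widehat\mu(\xi)|\lesssim(1+|\xi|)^{-\beta}$ for some $\beta>0$ — so the Fourier decay is not a decorative extra but a load-bearing hypothesis in the restriction proof, which is why the construction must be randomized. Two smaller notes. First, the single-scale decoupling at $p_c$ with scale-independent constant is a short duality/integration consequence of the $\Lambda(p_c)$ estimate (integrate the dual $\Lambda(p)$ bound over translates, apply reverse H\"older, then $L^2$ orthogonality); it does not need a Bourgain--Demeter-style induction on scales or a broad--narrow/multilinear reduction, and gesturing at those overstates what is required. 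Second, your stated decay range $\beta<\alpha/2$ overshoots: the construction gives $\beta<\min(\alpha/2,1)$, and the cap at $1$ is a genuine obstruction for this type of construction (cf.\ Shmerkin--Suomala), not an artefact you can wave away.
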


This complements the results of \cite{chen2}, \cite{chen-seeger2015}, \cite{shmerkin-suomala2014}, and provides a matching (except for the endpoint) result for all dimensions $0<\alpha<d$ that are not of the form $\alpha=d/k$.

The first main ingredient of our construction is Bourgain's theorem on $\Lambda(p)$ sets \cite{bourg-lambdaP} (see also Talagrand \cite{talagrand}). In its full generality, Bourgain's theorem applies to general bounded orthogonal systems of functions.
We state it here in the specific case of exponential functions on the unit cube in $\RR^d$. 
This provides an optimal restriction estimate on each single scale in the Cantor construction. 

\begin{theorem}\label{bourgain-thm} (Bourgain \cite{bourg-lambdaP})
Let $p>2$.
For every $N\in\NN$ sufficiently large, there is a set $S=S_N\subset \{0, 1,\dots, N-1\}^d$ of size $t\geq c_0 N^{2d/p}$ such that
\begin{equation} \label{lambda-p}
\|\sum_{a\in S} c_a e^{2\pi ia\cdot x}\|_{L^{p}[0,1]^d}\leq C(p) (\sum_{a\in S}|c_a|^2)^{1/2}
\end{equation}
with the constants $c_0$ and  $C(p)$ independent of $N$. (The set $S$ is called a $\Lambda(p)$-set.)
\end{theorem}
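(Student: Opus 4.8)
The cleanest route is simply to quote Bourgain's theorem on bounded orthonormal systems \cite{bourg-lambdaP} (or Talagrand \cite{talagrand}): the functions $\phi_a(x)=e^{2\pi i a\cdot x}$ with $a\in\{0,1,\dots,N-1\}^d$ form an orthonormal family in $L^2([0,1]^d)$, have $\|\phi_a\|_\infty=1$, and number $n:=N^d$, with $n^{2/p}=N^{2d/p}$, so the assertion is exactly the conclusion of that theorem. For the record, here is the plan of a proof. Fix $p>2$, let $p'=p/(p-1)\in(1,2)$, set $\delta:=N^{2d/p-d}=n^{2/p-1}$, and let $(\eta_a)_a$ be i.i.d.\ Bernoulli random variables with $\mathbb{P}(\eta_a=1)=\delta$; put $S:=\{a:\eta_a=1\}$. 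Then $\mathbb{E}|S|=\delta n=n^{2/p}$, so by Chernoff's inequality $|S|\geq\frac12 n^{2/p}$ outside an event of probability $\to0$ as $N\to\infty$. Writing $\Lambda_p(S)$ for the best constant in (\ref{lambda-p}) for the set $S$, it therefore suffices to show $\mathbb{E}[\Lambda_p(S)^2]\leq C(p)$; then $\Lambda_p(S)^2\leq 2C(p)$ with probability $\geq\frac12$, which for $N$ large meets the event $\{|S|\geq\frac12 n^{2/p}\}$ and produces the desired $S$ (with $c_0=\frac12$ and the $\Lambda(p)$-constant $\sqrt{2C(p)}$).

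By $L^p$--$L^{p'}$ duality, with $\widehat g(a):=\int_{[0,1]^d}g\,\overline{\phi_a}$ one has
\[
\Lambda_p(S)^2=\sup\Bigl\{\sum_a\eta_a|\widehat g(a)|^2:\ \|g\|_{L^{p'}([0,1]^d)}\leq 1\Bigr\}.
\]
Split $\sum_a\eta_a|\widehat g(a)|^2=\delta\sum_a|\widehat g(a)|^2+\sum_a(\eta_a-\delta)|\widehat g(a)|^2$, take the supremum over $g$, then the expectation. The first, deterministic, term is routine: for a competitor $g$ with $\|g\|_{p'}\leq1$ write $g=g_1+g_2$ with $g_1=g\one_{\{|g|\leq\lambda\}}$ and $g_2=g\one_{\{|g|>\lambda\}}$, where $\lambda:=n^{1/p'}$. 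Bessel's inequality gives $\sum_a|\widehat{g_1}(a)|^2\leq\|g_1\|_2^2\leq\lambda^{2-p'}$, while $|\widehat{g_2}(a)|\leq\|g_2\|_1\leq\lambda^{1-p'}$ gives $\sum_a|\widehat{g_2}(a)|^2\leq n\lambda^{2-2p'}$; with $\lambda=n^{1/p'}$ both are $\lesssim n^{2/p'-1}$, so, using $\frac1p+\frac1{p'}=1$, $\delta\sum_a|\widehat g(a)|^2\lesssim\delta\,n^{2/p'-1}=n^{2/p-1+2/p'-1}=1$. Hence $\sup_g\delta\sum_a|\widehat g(a)|^2\leq C$, the exponents matching exactly because of the choice of $\delta$.

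The remaining term, $\mathbb{E}\,\sup_{\|g\|_{p'}\leq1}\bigl|\sum_a(\eta_a-\delta)|\widehat g(a)|^2\bigr|$, is the crux, and I expect it to carry essentially all of the difficulty. The plan is to symmetrize, replacing $\eta_a-\delta$ first by $\varepsilon_a\eta_a$ with independent Rademacher signs $(\varepsilon_a)$, and then --- conditionally on $S$, via the Rademacher--Gaussian comparison --- by i.i.d.\ standard Gaussians $(g'_a)$, so that one must bound the Gaussian process $Z_g:=\sum_{a\in S}g'_a|\widehat g(a)|^2$ indexed by the unit ball of $L^{p'}$, with canonical distance $\rho(g,h)^2=\sum_{a\in S}\bigl(|\widehat g(a)|^2-|\widehat h(a)|^2\bigr)^2$. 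By Dudley's inequality this comes down to estimating $\int_0^\infty\bigl(\log N(u)\bigr)^{1/2}\,du$, where $N(u)$ is the covering number at scale $u$, in $\ell^2$, of the set $\{(|\widehat g(a)|^2)_{a\in S}:\|g\|_{p'}\leq1\}$. Getting these entropy numbers with the correct dependence on $n$ and $\delta$ is the hard part: one has to interpolate between the crude bound $|\widehat g(a)|\leq\|g\|_1\leq1$ (an $\ell^\infty$ diameter of order $1$), Bessel's inequality (the $\ell^2$ bound from the previous paragraph), and a volumetric estimate for the convex body $K=\{(\widehat g(a))_a:\|g\|_{p'}\leq1\}$ exploiting the uniform bound $\|\phi_a\|_\infty=1$, and organize this across all scales --- this is exactly where Bourgain's delicate iterative argument (or Talagrand's majorizing-measure machinery \cite{talagrand}) is needed. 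Once it yields $\mathbb{E}\,\sup_g\bigl|\sum_a(\eta_a-\delta)|\widehat g(a)|^2\bigr|\leq C(p)$, combining with the deterministic term gives $\mathbb{E}[\Lambda_p(S)^2]\leq C(p)$, and together with the size bound for $|S|$ this completes the proof. (That $|S|$ cannot be taken much larger than $N^{2d/p}$ --- so that this is the optimal single-scale estimate --- follows from Rudin's classical density bound for $\Lambda(p)$-sets.)
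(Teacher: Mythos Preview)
The paper does not prove this theorem at all: it is stated as a black box, attributed to Bourgain \cite{bourg-lambdaP} (with Talagrand \cite{talagrand} mentioned as an alternative), and then used as input to the Cantor construction. Your opening sentence---that the cleanest route is simply to quote Bourgain's theorem on bounded orthonormal systems applied to the system $\{e^{2\pi i a\cdot x}\}_{a\in[N]^d}$---is therefore exactly what the paper does, and for the purposes of this paper nothing more is required.

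The remainder of your proposal goes well beyond the paper by sketching Bourgain's actual argument. Your outline is accurate: the random selection with density $\delta=n^{2/p-1}$, the duality reformulation of $\Lambda_p(S)^2$, the deterministic bound via the level-set decomposition (your arithmetic there is correct), the symmetrization and passage to a Gaussian process, and the reduction to an entropy integral are all the right steps. You are also right that essentially all of the difficulty is concentrated in the entropy estimate, and you honestly flag that this is where one must invoke Bourgain's iterative argument or Talagrand's majorizing-measure theorem rather than supplying a self-contained bound. So your write-up is a faithful roadmap of the proof, but it is not itself a proof of the hard part---which is fine, since the paper makes no attempt at one either.
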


To pass from here to restriction estimates for multiscale Cantor sets, we use the decoupling techniques of Bourgain and Demeter \cite{BD2015}, \cite{BD-expo}. This produces 
localized restriction estimates of the form
\begin{equation}\label{r-e1}
\| \widehat{f} \|_{L^2(\mu)} \leq C_\epsilon R^\epsilon \|f\|_{L^{p'}([-R,R]^d)},
\end{equation}
or equivalently by duality,
\begin{equation}\label{r-e2}
\| \widehat{g d\mu} \|_{L^{p}([-R,R]^d)} \leq C_\epsilon R^\epsilon \|g\|_{L^{2}(\mu)}
\end{equation}
for all $\epsilon>0$, with constants independent of $R$. The $R^\epsilon$ factors account for the fact that we lose a constant factor at each step of the iteration. We will try to minimize these losses by applying Bourgain's theorem to an increasing sequence of values of $N$, but we will not be able to avoid them completely.

Finally, we use a variant of Tao's epsilon removal lemma \cite{tao-BR1999} to deduce the global restriction estimate (\ref{restriction}) from (\ref{r-e1}). This removes the $R^\epsilon$ factors, but at the cost of losing the endpoint exponent $p=2d/\alpha$. 
It is not clear whether the endpoint estimate can be obtained with our current methods.

Our proof of the localized restriction estimate (\ref{r-e1}) is fully deterministic. However, the epsilon removal lemma requires a pointwise Fourier decay estimate for $\mu$. Randomizing our construction enables us to prove 
the estimate (\ref{main-e2}) via an argument borrowed from \cite{LP}, \cite{shmerkin-suomala2014}. This proves the Fourier decay part of Theorem \ref{main-thm}, and is also sufficient to complete the epsilon removal argument. 

If $d\leq 2$, or if $d\geq 2$ and $\alpha\geq d-2$, the Cantor set supporting $\mu$ in Theorem \ref{main-thm} is a Salem set (i.e. its Fourier
dimension is equal to its Hausdorff dimension). The condition $\alpha\geq d-2$ is necessary for this type of constructions to produce a Salem set, for the same reasons as in \cite{shmerkin-suomala2014}. We note, however, that our proof of (\ref{restriction}) with $p>2d/\alpha$ does not require optimal Fourier decay and that the estimate (\ref{main-e2}) for {\em any} $\beta>0$ would suffice.


\section{The decoupling machinery}

We will use the decoupling machinery developed by Bourgain and Demeter \cite{BD2015}, \cite{BD-expo}. In this paper, we will follow the conventions of \cite{BD-expo},
with the surface measure on a paraboloid replaced by the natural measure on a Cantor set. 

We use $X \lesssim Y$ to say that $X \leq C Y$ for some constant $C>0$, and $X \approx Y$ to say that $X \lesssim Y$ and $X \gtrsim Y$. The constants such as $C,C'$, etc. and the implicit constants in $\lesssim$ may change from line to line, and may depend on $d$ and $p$, but are independent of variables or parameters such as $x,N,R, j, \ell$.
For quantities that depend on parameters such as $\epsilon$, we will write $X(\epsilon) \lesssim_\epsilon Y(\epsilon) $ as shortcut for ``for every $\epsilon>0$ there is a constant $C_\epsilon>0$ such that $X(\epsilon) \leq C_\epsilon Y(\epsilon)$."

We write $[N]=\{0,1,\dots,N-1\}$ and $B(x,r)=\{y\in\RR^d:\ |x-y|\leq r\}$. We use $|\cdot |$ to use the Euclidean ($\ell^2$) norm of a vector in $\RR^d$, the cardinality of a finite set, or the $d$-dimensional Lebesgue measure of a subset of $\RR^d$, depending on the context. Occasionally, we will also use the $\ell^\infty$ norm on $\RR^d$: if $x=(x_1,\dots,x_d)\in \RR^d$, we
write $|x|_\infty=\max(|x_1|,\dots,|x_d|)$. We will also sometimes use $\mathcal{F}$ for the Fourier transform, so that $\mathcal{F}f =\widehat{f}$.

%
%
%

Following \cite{BD-expo}, we will use cube-adjusted weights. An {\it $R$-cube} will be a $d$-dimensional cube of side length $R$, with all sides parallel to coordinate hyperplanes. Unless stated otherwise, we will assume $R$-cubes to be closed. If $I$ is an $R$-cube centered at $c$, we define
$$
w_I(x)=\left(1+\frac{|x-c|}{R}\right)^{-100}
$$
and 
$$
\|F\|_{L^p_\sharp (w_I)}=\left( \frac{1}{|I|} \int |F|^p w_I\right)^{1/p}.
$$
If $\eta:\RR\to[0,\infty)$ is a function (usually Schwartz), and $I$ is as above, we will write
$$
\eta_I(x)=\eta\left(\frac{x-c}{R}\right).
$$
If $g:\RR\to\CC$ is a function, $I$ is an interval, and $\sigma$ is a measure (which will usually be clear from context), we will write
$$
E_Ig =\mathcal{F}^{-1}(\one_I g d\sigma).
$$
We will use the following tools from Bourgain-Demeter, which we restate here in a version adapted to our setting.

\begin{lemma} \label{bd-lemma4.2}
{\bf (Reverse H\"older inequality, \cite[Corollary 4.2]{BD-expo}).} Let $1\leq p\leq q$. If $I$ is a $1/R$-cube and $J$ is an $R$-cube, then  
\begin{equation}\label{reverse-Holder}
\|E_Ig\|_{L^q_\sharp (w_J)} \lesssim \|E_Ig\|_{L^p_\sharp (w_J)} 
\end{equation}
with the implicit constant independent of $R,I,J,g$.
\end{lemma}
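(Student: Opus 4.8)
This lemma is a transcription of \cite[Corollary 4.2]{BD-expo}, with the surface measure on the paraboloid replaced by the Cantor measure $\sigma$, and I would establish it by the same ``local constancy'' argument; the only structural input is that $\widehat{E_Ig}=\one_I\,g\,d\sigma$ is supported in a cube of side $1/R$.

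First I would normalize the frequency cube: replacing $g(\xi)$ by $e^{-2\pi i\,c_I\cdot\xi}\,g(\xi)$, where $c_I$ is the center of $I$, multiplies $E_Ig$ by a unimodular factor and therefore changes neither $|E_Ig|$ nor either side of \eqref{reverse-Holder}, so I may assume $I=[-\tfrac1{2R},\tfrac1{2R}]^d$. Fix once and for all a Schwartz function $\psi$ on $\RR^d$ with $\widehat\psi\equiv1$ on $[-\tfrac12,\tfrac12]^d$ and set $\psi_R(y)=R^{-d}\psi(y/R)$, so that $\widehat{\psi_R}=\widehat\psi(R\,\cdot\,)\equiv1$ on $I$ and hence $E_Ig=\psi_R*E_Ig$. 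Since $|\psi_R(y)|\lesssim_M R^{-d}(1+|y|/R)^{-M}$ for every $M$ and $\|\psi_R\|_1\lesssim1$, applying H\"older's inequality to $|E_Ig(x)|\le\int|E_Ig(x-y)|\,|\psi_R(y)|\,dy$ with the splitting $|\psi_R|=|\psi_R|^{1/p}|\psi_R|^{1/p'}$ gives the pointwise estimate
\[
|E_Ig(x)|^{p}\ \lesssim_M\ R^{-d}\int|E_Ig(y)|^{p}\,\Big(1+\tfrac{|x-y|}{R}\Big)^{-M}\,dy\ =\ \big\|E_Ig\big\|_{L^p_\sharp(w_{B(x,R)})}^{p},
\]
valid for each $x\in\RR^d$, with $M$ as large as we wish.

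It remains to raise this to the power $q$, integrate against $w_J$, and trade the $B(x,R)$-adjusted weight on the right-hand side for the single weight $w_J$. Writing $h=|E_Ig|^{p}$ and $r=q/p\ge1$, we get $|E_Ig|^{q}\lesssim(h*\kappa_R)^{r}$ with $\kappa_R(y)=R^{-d}(1+|y|/R)^{-M}$; on the cube $2J$ one bounds $(h*\kappa_R)(x)\lesssim\|E_Ig\|_{L^p_\sharp(w_J)}^{p}$, because there the factor $\big(1+\tfrac{|x-y|}{R}\big)^{-M}$ is dominated by a multiple of $w_J(y)$. The real work is on the dyadic shells $2^{k+1}J\setminus2^kJ$, $k\ge1$: here one must combine the rapid decay of $w_J$ with the fact that $E_Ig$ cannot concentrate on a shell far from $J$ without leaving a comparable contribution near $J$ --- for a general function with Fourier support in a $1/R$-cube this last fact is false, and it is exactly here that the structure of $\sigma$ enters (the Frostman bound \eqref{main-e1} and the Fourier decay \eqref{main-e2}), in the same role that the curvature of the paraboloid plays in \cite{BD-expo}. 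I expect this weight-trading on the far shells, rather than any of the preceding steps, to be the main obstacle; once it is in place, Minkowski's and H\"older's inequalities (using $r\ge1$) reassemble the pieces into \eqref{reverse-Holder}. An equivalent and perhaps more transparent route is to tile $\RR^d$ by $R$-cubes, apply the dual-scale reverse H\"older and Nikolskii's inequality on each of them, and then sum; the same difficulty resurfaces there as the need to control the resulting $\ell^{q/p}$-versus-$\ell^1$ sequence inequality using the non-concentration of $E_Ig$.
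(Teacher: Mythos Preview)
The paper does not supply a proof; the lemma is quoted from \cite[Corollary~4.2]{BD-expo}. Your opening moves are correct and match the standard argument: center $I$ at the origin, write $E_Ig=\psi_R*E_Ig$ for a Schwartz $\psi$ with $\widehat\psi\equiv1$ on $[-\tfrac12,\tfrac12]^d$, and deduce the pointwise domination $|E_Ig(x)|^{p}\lesssim\|E_Ig\|_{L^p_\sharp(w_{B(x,R)})}^{p}$.

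Where your proposal goes wrong is in the next paragraph. You claim that the contribution of the far shells can only be controlled using ``the fact that $E_Ig$ cannot concentrate on a shell far from $J$ without leaving a comparable contribution near $J$,'' and that this is where ``the structure of $\sigma$ enters (the Frostman bound \eqref{main-e1} and the Fourier decay \eqref{main-e2}), in the same role that the curvature of the paraboloid plays in \cite{BD-expo}.'' Both halves of this are incorrect. In Bourgain--Demeter, Corollary~4.2 uses no curvature at all; it is a pure uncertainty-principle statement about functions whose Fourier support lies in a $1/R$-cube, valid for any $\sigma$ and any $g$. The curvature of the paraboloid enters the decoupling theorem at a completely different point (the multilinear Kakeya / ball-inflation step), not in the reverse H\"older inequality. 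In the present paper this is made explicit: Lemma~\ref{bd-lemma4.2} is applied in the proof of Lemma~\ref{discrete to continuous} with $\sigma$ equal to the Lebesgue measure, so there is no Cantor structure to exploit; and the estimates \eqref{main-e1}, \eqref{main-e2} concern the limiting measure $\mu$, not $\sigma$, and are not even available at the stage of the argument where this lemma is first used.

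The actual completion is elementary weight manipulation, not a non-concentration argument. The triangle-inequality estimate $(1+|y-c|/R)\le(1+|x-c|/R)(1+|x-y|/R)$ yields, for $M\ge100$, the comparison $(1+|x-y|/R)^{-M}\lesssim w_J(x)^{-1}w_J(y)$; feeding this back into your pointwise bound and summing over a tiling of $\RR^d$ by $R$-cubes --- exactly your ``equivalent and perhaps more transparent route'' --- gives \eqref{reverse-Holder} directly, with no appeal to any property of $E_Ig$ beyond its Fourier support. (As usual in this subject, the exponent $100$ in $w_J$ is to be read as ``a fixed constant, large enough relative to $p$, $q$, $d$''; this is the standing convention in \cite{BD-expo} as well.) The obstacle you anticipated on the far shells is not there.
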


\begin{lemma} \label{bd-prop6.1}
{\bf ($L^2$ decoupling, \cite[Proposition 6.1]{BD-expo}).} Let $I$ be a $k/R$-cube for some $k\in\NN$, and let $I=I_1\cup \dots \cup I_k$ be a tiling of $I$ by $1/R$-cubes disjoint except for their boundaries. Then for any $R$-cube $J$ we have 
\begin{equation}\label{l2-decoupling}
\|E_Ig\|^2_{L^2 (w_J)} \lesssim \sum_j \|E_{I_j}g\|^2_{L^2 (w_J)} .
\end{equation}
\end{lemma}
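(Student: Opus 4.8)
\emph{Proof plan.} I would treat this as an almost-orthogonality statement. Since $E_Ig=\sum_j E_{I_j}g$ and the pieces $E_{I_j}g$ have Fourier transforms supported in the pairwise (essentially) disjoint $1/R$-cubes $I_j$, the bound (\ref{l2-decoupling}) with $w_J$ replaced by $\one$ and $L^2(w_J)$ by $L^2(\RR^d)$ is nothing but Plancherel, with constant $1$. The only obstruction is the weight: $w_J$ is not band-limited, but its Fourier transform is concentrated on $B(0,C/R)$ for a dimensional constant $C$. So the plan is: (i) dominate $w_J$ from above and below, up to constants, by a superposition of genuinely band-limited bumps at frequency scale $1/R$; (ii) on each bump, combine Plancherel with a bounded-overlap count; (iii) reassemble. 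The only step I expect to need care is (i).

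\emph{Step (i).} Fix once and for all a Schwartz function $\rho$ on $\RR^d$ with $\widehat\rho$ supported in $B(0,C/2)$ and $|\rho|^2\gtrsim 1$ on $B(0,\sqrt d/2)$ — for instance $\rho(x)=\rho_0(\lambda x)$ with $\widehat{\rho_0}$ a fixed compactly supported bump, $\rho_0(0)>0$, and $\lambda>0$ small. Put $\psi:=|\rho|^2$, so $\widehat\psi=\widehat\rho*\widehat{\overline\rho}$ is supported in $B(0,C)$, and for the $R$-cube $J$ with center $c_J$ set $\rho_n(x):=\rho\big((x-c_J)/R-n\big)$ and $\psi_n:=|\rho_n|^2$ for $n\in\ZZ^d$; then $\widehat{\rho_n}$ is supported in $B(0,C/R)$ for every $n$. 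The Schwartz decay of $\psi$ together with its lower bound on $B(0,\sqrt d/2)$ should yield the pointwise comparison
\[
w_J(x)\ \approx\ \sum_{n\in\ZZ^d}(1+|n|)^{-100}\,\psi_n(x)\qquad(x\in\RR^d),
\]
with constants depending only on $d$ — for the upper bound split the sum according to whether $|n|$ or $|(x-c_J)/R-n|$ exceeds half of $|(x-c_J)/R|$, and for the lower bound keep only the term with $n$ nearest to $(x-c_J)/R$.

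\emph{Steps (ii)--(iii).} Fix $n$ and expand, using $\psi_n=|\rho_n|^2$, $\int|E_Ig|^2\psi_n=\sum_{j,l}\langle (E_{I_j}g)\rho_n,(E_{I_l}g)\rho_n\rangle_{L^2}$. Since $\widehat{E_{I_j}g}=\one_{I_j}\,g\,d\sigma$ is supported in $I_j$, the Fourier transform of $(E_{I_j}g)\rho_n$ is supported in the $(C/R)$-neighborhood of the $1/R$-cube $I_j$; as the $I_j$ tile a cube by $1/R$-cubes, these fattened cubes overlap with multiplicity bounded by a constant $M=M(d)$, independent of $R$ and of the number of subcubes. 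Writing $j\sim l$ when $I_j$ and $I_l$ lie within $O(1/R)$ of one another, Plancherel annihilates every cross term with $j\not\sim l$, and Cauchy--Schwarz followed by AM--GM then gives $\int|E_Ig|^2\psi_n\lesssim\sum_j\int|E_{I_j}g|^2\psi_n$. Multiplying by $(1+|n|)^{-100}$, summing over $n$, and invoking the comparison of step (i) in both directions,
\[
\|E_Ig\|^2_{L^2(w_J)}\ \lesssim\ \sum_n(1+|n|)^{-100}\!\int|E_Ig|^2\psi_n\ \lesssim\ \sum_j\int|E_{I_j}g|^2\Big(\sum_n(1+|n|)^{-100}\psi_n\Big)\ \lesssim\ \sum_j\|E_{I_j}g\|^2_{L^2(w_J)},
\]
which is (\ref{l2-decoupling}); no step depends on $R$, $I$, $J$, $g$, or $k$. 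The hard part, as noted, is engineering the two-sided band-limited comparison of step (i) for the polynomially decaying weight $w_J$; once that is in hand the orthogonality argument is the standard Plancherel-plus-bounded-overlap one.
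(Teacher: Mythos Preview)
Your argument is correct. The paper does not actually prove this lemma---it simply cites \cite[Proposition 6.1]{BD-expo}---so there is no ``paper's own proof'' to compare against; your approach (dominating $w_J$ above and below by a lattice superposition $\sum_n(1+|n|)^{-100}\psi_n$ of nonnegative bumps with Fourier support at scale $1/R$, then invoking Plancherel plus bounded overlap on each bump) is precisely the standard mechanism behind weighted $L^2$ almost-orthogonality and is essentially how the Bourgain--Demeter reference handles it.
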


\begin{lemma} \label{alt-locally-constant}
{(\bf Band-limited functions are locally constant, cf. \cite[\S2.2]{BG})}
There is a non-negative function $\eta\in L^1(\RR^d)$ such that the following holds. For every $R>0$, and every integrable function $h:\RR^d\to\CC$ supported on a $1/R$-cube $I$, there is a function $H:\RR\to[0,\infty)$ such that:
\begin{itemize}
\item $H$ is constant on each semi-closed $R$-cube $J_\nu:=R\nu+[0,R)^d$, $\nu\in\ZZ^d$,
\item $|\widehat{h}(x)|\leq H(x)\leq (|\widehat{h}|*\eta_R)(x)$ for all $x\in\RR$, where $\eta_R(y)=\frac{1}{R^d}\eta(\frac{y}{R})$. In particular, 
\begin{equation}\label{HNotTooLarge}
\|H\|_{L^1(\RR^d)}\leq \|\eta\|_{L^1(\RR^d)} \|\widehat{h}\|_{L^1(\RR^d)}.
\end{equation}
\end{itemize}
\end{lemma}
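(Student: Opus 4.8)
The plan is to build $\eta$ out of a single fixed Schwartz reproducing kernel, use it to get a pointwise majorization $|\widehat h|\lesssim \zeta_R*|\widehat h|$ for a convenient $\zeta$, and then upgrade the right-hand side to an honestly cube-constant majorant by exploiting that a mollification at scale $R$ cannot change by more than a bounded factor across a single $R$-cube.

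First I would fix the kernel. Choose $\phi\in C_c^\infty(\RR^d)$ with $\phi\equiv 1$ on $[-1/2,1/2]^d$ and $\mathrm{supp}\,\phi\subset[-1,1]^d$, and set $G_0:=\mathcal F^{-1}\phi$, a Schwartz function; in particular, fixing an integer $N>d$, there is $c_0$ with $|G_0(u)|\le c_0(1+|u|^2)^{-N}=:c_0\,\zeta(u)$ for all $u$. Given $R$ and a $1/R$-cube $I$ centered at $c_I$ with $\mathrm{supp}\,h\subset I$, define $\widehat G(\xi):=\phi\big(R(\xi+c_I)\big)$, so that $\widehat G\equiv 1$ on $-I$ while $|G(x)|=R^{-d}|G_0(x/R)|$ (the frequency translation only produces a unimodular factor, which is where the arbitrary location of $I$ is harmlessly absorbed). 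Since $\mathcal F\widehat h=h(-\,\cdot\,)$ is supported on $-I$, the product $\widehat G\cdot\mathcal F\widehat h$ equals $\mathcal F\widehat h$, giving the reproducing identity $\widehat h=G*\widehat h$ as tempered distributions, hence pointwise since $G\in L^1$ and $\widehat h$ is bounded and continuous. Taking absolute values,
\[
|\widehat h(x)|\le (|G|*|\widehat h|)(x)\le c_0\,(\zeta_R*|\widehat h|)(x),\qquad \zeta_R(y):=R^{-d}\zeta(y/R).
\]

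Next I would record the elementary slow-variation estimate for $\zeta$: from $1+|u|^2\le(2d+1)(1+|u+v|^2)$ for $|v|\le\sqrt d$ one gets $\zeta(u+v)\le c_1\,\zeta(u)$ with $c_1=(2d+1)^N$, and rescaling this yields $(\zeta_R*|\widehat h|)(z)\le c_1\,(\zeta_R*|\widehat h|)(x)$ whenever $|z-x|\le\sqrt d\,R$. Now define, for $x$ in the cube $J_\nu$, $H(x):=\sup_{z\in\overline{J_\nu}}|\widehat h(z)|$; this is finite (continuity of $\widehat h$ and compactness), constant on each $J_\nu$, and satisfies $|\widehat h(x)|\le H(x)$ trivially. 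Choosing $z^\ast\in\overline{J_\nu}$ with $H(x)=|\widehat h(z^\ast)|$ and using $|z^\ast-x|\le\sqrt d\,R$, the two displayed inequalities give $H(x)\le c_0\,(\zeta_R*|\widehat h|)(z^\ast)\le c_0c_1\,(\zeta_R*|\widehat h|)(x)$. Setting $\eta:=c_0c_1\,\zeta$, which is non-negative, lies in $L^1(\RR^d)$ because $2N>d$, and depends only on $d$, we obtain $|\widehat h|\le H\le \eta_R*|\widehat h|$ pointwise; integrating this and applying Young's inequality, together with $\|\eta_R\|_{L^1}=\|\eta\|_{L^1}$, yields (\ref{HNotTooLarge}).

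The argument is routine, and I do not expect a genuine obstacle. The one point that needs a little care is the passage from the mollified bound $|\widehat h|\lesssim\zeta_R*|\widehat h|$ to a majorant that is literally constant on each $R$-cube $J_\nu$; this is exactly where the slow-variation of $\zeta$ across a set of diameter $\sqrt d\,R$ is used, and one should also check that the distributional manipulations ($\mathcal F\widehat h=h(-\,\cdot\,)$, the reproducing identity, and its pointwise validity) are legitimate for merely integrable $h$, which they are since $h$ is compactly supported and $\widehat h$ is continuous and bounded.
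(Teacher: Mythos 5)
Your argument is correct and follows the same route as the paper's: write $\widehat h$ via a reproducing convolution against a Schwartz kernel that is $\equiv 1$ on the frequency support, define $H$ as the supremum of $|\widehat h|$ over each $R$-cube, and dominate by a slowly varying majorant $\eta_R$ using that the majorant changes by at most a bounded factor across an $R$-cube. The only cosmetic difference is how the slow variation is encoded: the paper takes $\eta(x)=\sup_{|y-x|_\infty\le1}\widehat\chi(y)$ so the required monotonicity is built into the definition, whereas you use the explicit polynomial majorant $\zeta(u)=(1+|u|^2)^{-N}$ and verify $\zeta(u+v)\le(2d+1)^N\zeta(u)$ directly, then fold the constant into $\eta$; both yield the same conclusion.
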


\begin{proof}
Replacing $h$ by $h(\cdot -c)$ and $\widehat{h}(x)$ by $e^{-2\pi i c\cdot x}\widehat{h}(x)$ if necessary, we may assume that
$I=[0,\frac{1}{R}]^d$. 
Let $\chi$ be a non-negative Schwartz function such that $\chi\equiv  1$ on $[0,1]^d$ and that $\widehat{\chi}(x)$ is non-negative, radially symmetric and decreasing in $|x|$. Then $\chi(R\cdot)\equiv 1$ on
$I$, and $\widehat{\chi(R\cdot})= \frac{1}{R^d}\widehat{\chi}(\frac{\cdot}{R})$. Define
$$
\eta(x):=\sup_{|y-x|_\infty \leq 1} \widehat{\chi(y)}
$$
and
$$
H(x):=\sup\{|\widehat{h}(y)|:\ x,y\hbox{ belong to the same }J_\nu\}.
$$
Clearly, $\eta$ is integrable and $H$ is constant on each $J_\nu$. We have $|\widehat{h}(x)|\leq H(x)$ by definition. To prove the second inequality, we note that $h=h\chi(R\cdot)$, so that $\widehat{h}=\widehat{h}*\widehat{\chi(R\cdot)}$. Suppose that $x\in J_\nu$ for some $\nu\in\ZZ^d$, then for each $y\in J_\nu$ we have
$$
|\widehat{h}(y)|\leq \int |\widehat{h}(z)|  \,\frac{1}{R^d}\, \widehat{\chi}(\frac{y-z}{R})dz
$$
Since $|x-y|_\infty \leq R$, we have $|\frac{y-z}{R}-\frac{x-z}{R}|_\infty=|\frac{y-x}{R}|_\infty \leq 1$, so that by the definition of $\eta$ we have
$\eta(\frac{x-z}{R})\geq \widehat{\chi}(\frac{y-z}{R})$. Hence
$$
|\widehat{h}(y)|\leq \int |\widehat{h}(z)|  \,\frac{1}{R^d}\, \eta(\frac{x-z}{R})dz = (|\widehat{h}|*\eta_R)(x),
$$
and the desired inequality follows upon taking the supremum over $y\in J_\nu$. Finally, by Fubini's theorem and rescaling we have
$$
\|H\|_{L^1(\RR^d)}
\leq \|\widehat{h}\|_{L^1(\RR^d)} \|\eta_R\|_{L^1(\RR^d)}
= \|\widehat{h}\|_{L^1(\RR^d)}\|\eta\|_{L^1(\RR^d)} .
$$
\end{proof}

\begin{corollary}\label{small intervals}
For every $R>0$, $M\in\NN$, every integrable function $h:\RR^d\to\CC$ supported on an $(MR)^{-1}$-cube $I$, and every $R$-cube $J$,
we have
\begin{equation}\label{small intervals eq}
\|\widehat{h}\|_{L^1(w_J)} \lesssim \frac{1}{M^d} \|\widehat{h}\|_{L^1(\RR^d)} .
\end{equation}
\end{corollary}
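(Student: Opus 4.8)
The plan is to invoke Lemma~\ref{alt-locally-constant}, but at scale $MR$ rather than $R$: since $h$ is supported on a cube of side $(MR)^{-1}$, that lemma (with $MR$ in place of $R$) produces a nonnegative $H:\RR^d\to[0,\infty)$ which is constant on every semi-closed $(MR)$-cube $Q_\nu=MR\nu+[0,MR)^d$, $\nu\in\ZZ^d$, and which satisfies $|\widehat h(x)|\le H(x)\le(|\widehat h|*\eta_{MR})(x)$ together with $\|H\|_{L^1(\RR^d)}\le\|\eta\|_{L^1(\RR^d)}\|\widehat h\|_{L^1(\RR^d)}$. All of the work then reduces to estimating $\int H\,w_J$, since $\|\widehat h\|_{L^1(w_J)}=\int|\widehat h|\,w_J\le\int H\,w_J$.

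To carry this out I would exploit that $H$ is constant at scale $MR$ while $w_J$ is essentially concentrated at scale $R$. Writing $H\equiv c_\nu$ on $Q_\nu$, one has $\int H\,w_J=\sum_\nu c_\nu\int_{Q_\nu}w_J\le\big(\sup_\nu\int_{Q_\nu}w_J\big)\sum_\nu c_\nu$. For the first factor, $\int_{Q_\nu}w_J\le\int_{\RR^d}w_J=R^d\int_{\RR^d}(1+|y|)^{-100}\,dy\lesssim R^d$, the integral converging because the exponent $100$ exceeds $d$. For the second, $\sum_\nu c_\nu=(MR)^{-d}\sum_\nu c_\nu|Q_\nu|=(MR)^{-d}\|H\|_{L^1(\RR^d)}\le(MR)^{-d}\|\eta\|_{L^1}\|\widehat h\|_{L^1}$, which is legitimate since $\|H\|_{L^1}<\infty$ forces the sum to converge. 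Multiplying, $\int H\,w_J\lesssim\frac{R^d}{(MR)^d}\|\widehat h\|_{L^1}=M^{-d}\|\widehat h\|_{L^1}$, which is (\ref{small intervals eq}). Alternatively, one can observe that $\eta$ is bounded — it is a supremum of a Schwartz function over a unit ball, as in the proof of Lemma~\ref{alt-locally-constant} — so that $\|H\|_{L^\infty}\le\|\widehat h\|_{L^1}\|\eta_{MR}\|_{L^\infty}=(MR)^{-d}\|\eta\|_{L^\infty}\|\widehat h\|_{L^1}$ by Young's inequality, and then $\int|\widehat h|\,w_J\le\|H\|_{L^\infty}\int w_J\lesssim M^{-d}\|\widehat h\|_{L^1}$ at once; the two arguments are the same in substance, and I would present whichever reads more cleanly in context.

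I do not expect a genuine obstacle here: the statement only records the heuristic that $\widehat h$ varies on the scale $MR$ dictated by the support of $h$, whereas the weight $w_J$ carries total mass $\approx R^d$, a factor $M^d$ smaller than the volume $(MR)^d$ of a cube on which $\widehat h$ is (essentially) constant. If anything, the two points requiring a word of care are routine: the convergence $\int_{\RR^d}w_J\lesssim R^d$ (which is where the exponent $100$ in the definition of $w_J$ enters, and is harmless for the dimensions under consideration), and the bookkeeping in discretizing $\int H\,w_J$ over the cubes $Q_\nu$.
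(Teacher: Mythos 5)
Your first argument is essentially the paper's proof verbatim: apply Lemma~\ref{alt-locally-constant} at scale $MR$, bound each $\int_{Q_\nu}w_J$ by $\int_{\RR^d}w_J\lesssim R^d$, and recover $\sum_\nu c_\nu=(MR)^{-d}\|H\|_{L^1}$ to collect the factor $M^{-d}$. The alternative via $\|H\|_{L^\infty}\lesssim(MR)^{-d}\|\widehat h\|_{L^1}$ is a slightly slicker packaging of the same estimate and is also fine.
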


\begin{proof}
Let $L_\nu=MR\nu + [0,MR)^d$ for $\nu\in\ZZ^d$. Let $H$ be the function provided by Lemma \ref{alt-locally-constant} with $R$ replaced by $MR$, so that on each $L_\nu$ we have $H(x)\equiv H_\nu$ for some constant $H_\nu\geq 0$. Then
\begin{align*}
\|\widehat{h}\|_{L^1(w_J)} & \leq \int H(x) w_J(x) dx = \sum_\nu  H_\nu \int_{L_\nu} w_J(x) dx
\\
&\leq \sum_\nu H_\nu \int_{\RR} w_J(x) dx \\
&= \sum_\nu H_\nu R^d \int_{\RR} w_{[0,1]^d}(x) dx.
\end{align*}
Let $C_1:= \int_{\RR^d} w_{[0,1]^d}(x) dx$, then
\begin{align*}
\|\widehat{h}\|_{L^1(w_J)} 
&\leq C_1 \sum_\nu H_\nu R^d = \frac{C_1}{M^d}\sum_\nu H_\nu (MR)^d\\
&=\frac{C_1}{M^d}\int_{\RR} H(x) dx \\
&\lesssim \frac{1}{M^d}  \|\widehat{h}\|_{L^1(\RR^d)},
\end{align*}
where at the last step we used (\ref{HNotTooLarge}).
\end{proof}

\section{Single-scale decoupling}

We begin with a single-scale decoupling inequality for Cantor sets with $\Lambda(p)$ alphabets. 
We will need the following ``continuous" version of Theorem \ref{bourgain-thm}.

\begin{lemma}\label{lemma-conversion}
Let $p>2$, and let $S\subset[N]^d$ be as in Theorem \ref{bourgain-thm}. Then for all $h$ supported on $E:=S+[0,1]^d$ we have the inequality
\begin{equation}\label{dec-e4}
\|\widehat{h}\|_{L^p([0,1]^d)}\lesssim C(p) \|h\|_{L^2(\RR^d)}
\end{equation}
\end{lemma}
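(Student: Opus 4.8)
The plan is to deduce the ``continuous'' inequality \eqref{dec-e4} from the discrete $\Lambda(p)$ estimate \eqref{lambda-p} by slicing $h$ according to the lattice $\ZZ^d$. Write $h=\sum_{a\in S}h_a$, where $h_a=h\cdot\one_{a+[0,1)^d}$ is the restriction of $h$ to the unit cube based at $a$; translating each piece back to the origin, set $g_a(y)=h_a(y+a)$, supported on $[0,1)^d$. Then
\[
\widehat{h}(\xi)=\sum_{a\in S}e^{-2\pi i a\cdot\xi}\,\widehat{g_a}(\xi).
\]
On the unit cube $[0,1]^d$ the factors $e^{-2\pi i a\cdot\xi}$ are exactly the exponentials appearing in Theorem \ref{bourgain-thm}, but now with slowly varying ``coefficients'' $\widehat{g_a}(\xi)$ rather than constants. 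The main point is therefore to upgrade \eqref{lambda-p} from scalar coefficients $c_a$ to the functions $\xi\mapsto\widehat{g_a}(\xi)$, which are essentially constant at scale $1$ since each $g_a$ is supported in a unit cube.

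Concretely, I would argue as follows. First reduce to the case where each $\widehat{g_a}$ is genuinely slowly varying: since $g_a$ is supported in $[0,1)^d$, the function $\widehat{g_a}$ is band-limited in the appropriate dual sense, so (by Lemma \ref{alt-locally-constant}, or a direct argument with a fixed Schwartz majorant) its modulus is controlled by a function that is roughly constant on unit cubes, with the relevant $L^p([0,1]^d)$ and $L^2(\RR^d)$ norms comparable up to absolute constants. One clean way to make this rigorous is to introduce a partition of unity or to sample: pick a bump $\phi$ with $\widehat\phi$ supported near $[0,1]^d$ and $\phi$ bounded below on $[0,1]^d$, write $\widehat{g_a}(\xi)=\widehat{g_a}(\xi)$ and expand $\widehat{g_a}$ on $[0,1]^d$ in a way that lets the $a$-dependence be pulled out. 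Then apply \eqref{lambda-p} pointwise (in the auxiliary variable), take $L^p$ norms, and use Minkowski's inequality to land on
\[
\|\widehat{h}\|_{L^p([0,1]^d)}\lesssim C(p)\,\Big\|\big(\textstyle\sum_{a\in S}|\widehat{g_a}|^2\big)^{1/2}\Big\|_{L^p([0,1]^d)}
\lesssim C(p)\,\Big(\sum_{a\in S}\|\widehat{g_a}\|_{L^p([0,1]^d)}^2\Big)^{1/2},
\]
where the last step is again Minkowski in $\ell^2\subset\ell^p$ fashion (valid since $p>2$), or a direct interchange. Finally, for each single piece, $\widehat{g_a}$ restricted to a unit cube is controlled in $L^p$ by its $L^2(\RR^d)$ norm (here is where ``locally constant plus Plancherel'' enters: $\|\widehat{g_a}\|_{L^p([0,1]^d)}\lesssim \|\widehat{g_a}\|_{L^2(\RR^d)}=\|g_a\|_{L^2}=\|h_a\|_{L^2}$), and summing in $a$ and using disjointness of the supports of the $h_a$ gives $\sum_a\|h_a\|_{L^2}^2=\|h\|_{L^2(\RR^d)}^2$, which is \eqref{dec-e4}.

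The step I expect to be the main obstacle is the passage from constant coefficients $c_a$ in \eqref{lambda-p} to the function-valued coefficients $\widehat{g_a}$: one has to verify that the $\Lambda(p)$ bound, which a priori only controls exponential sums with fixed coefficients on $[0,1]^d$, still applies when the coefficients are replaced by functions that merely oscillate slowly compared to the exponentials $e^{2\pi i a\cdot x}$. The cleanest route is probably to exploit that $\widehat{g_a}$ is (up to a harmless Schwartz tail) frequency-localized to a unit ball, so that on each translate of $[0,1]^d$ it is, up to constants, a genuine constant — reducing matters to \eqref{lambda-p} on a single cube and then summing the contributions of the lattice of cubes with the weights $w_J$ of Section 2. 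Everything else — Plancherel, Minkowski, disjointness of unit-cube supports — is routine, and the constant produced is $C(p)$ times an absolute constant depending only on $d$ and $p$, as claimed.
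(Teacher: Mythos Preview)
Your approach is genuinely different from the paper's, and while its outline is sound, the step you yourself flag as the main obstacle is left too vague to count as a complete proof.

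The paper's argument never touches the locally-constant heuristic. Instead it dualizes twice. From \eqref{lambda-p} one gets, by duality, $\|\widehat f\,\|_{\ell^2(S)}\le C(p)\|f\|_{L^{p'}([0,1]^d)}$; since the constant is translation-invariant, the same holds with $S$ replaced by $S+z$ for every $z\in[0,1]^d$. Integrating the square of this bound over $z\in[0,1]^d$ converts the discrete $\ell^2(S)$ norm into the continuous $L^2(E)$ norm, yielding $\|\widehat f\,\|_{L^2(E)}\le C(p)\|f\|_{L^{p'}([0,1]^d)}$. A second duality then gives \eqref{dec-e4}. This sidesteps the variable-coefficient issue entirely.

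Your route stays on the extension side and tries to upgrade \eqref{lambda-p} from scalar $c_a$ to coefficients $\widehat{g_a}(\xi)$. The displayed chain $\|\widehat h\|_{L^p}\lesssim C(p)\|(\sum_a|\widehat{g_a}|^2)^{1/2}\|_{L^p}$ is \emph{not} an automatic consequence of \eqref{lambda-p}; for general functions in place of $\widehat{g_a}$ it is false. To make it work you must actually exploit the band-limitedness of $g_a$. One clean implementation: choose a Schwartz $\chi$ with $\chi\equiv 1$ on $[0,1]^d$, write $\widehat{g_a}=\widehat{g_a}*\widehat\chi$, insert this and interchange to get
\[
\widehat h(\xi)=\int \widehat\chi(\xi-\zeta)\Bigl(\sum_{a\in S}e^{-2\pi i a\cdot\xi}\,\widehat{g_a}(\zeta)\Bigr)\,d\zeta,
\]
then apply Minkowski in $\zeta$, use \eqref{lambda-p} with the now genuinely constant coefficients $\widehat{g_a}(\zeta)$, and finish with Cauchy--Schwarz in $\zeta$ against the rapidly decaying kernel $\sup_{\xi\in[0,1]^d}|\widehat\chi(\xi-\zeta)|$. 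This lands directly on $C(p)\,(\sum_a\|g_a\|_{L^2}^2)^{1/2}=C(p)\|h\|_{L^2}$, so your final reverse-H\"older step is not even needed. Your prose gestures at such an ``auxiliary variable'' but never writes it down; once you do, the argument is fine, though arguably less clean than the paper's two-line duality-plus-integration.
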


\begin{proof}
We have
\begin{align*}
\Big\| \sum_{a\in S} c_a e^{2\pi i a\cdot x}\Big\| _{L^p([0,1]^d)}
&= \sup_{ \|f\|_{L^{p'}([0,1]^d)}=1} \Big\langle f, \sum_{a\in S} c_a e^{2\pi i a\cdot x} \Big\rangle
\\
&= \sup_{ \|f\|_{L^{p'}([0,1]^d)}=1} \Big\langle \widehat{f}, \sum_{a\in S} c_a \delta_a \Big\rangle
\\
&= \sup_{ \|f\|_{L^{p'}([0,1]^d)}=1} \sum_{a\in S} c_a \widehat{f}(a)
\end{align*}
By (\ref{lambda-p}), it follows that
\[
\sup_{\|c_a\|_{\ell^2(S)}=1} \sup_{ \|f\|_{L^{p'}([0,1]^d)}=1} \sum_{a\in S} c_a \widehat{f}(a) \leq C(p),
\]
so that
\[
\|\widehat{f}(a)\|_{\ell^2(S)} \leq C(p) \|f\|_{L^{p'}([0,1]^d)}
\]
Similarly, for any translate $S+z$ of $S$ we have
\[
\|\widehat{f}(a)\|_{\ell^2(S+z)} \leq C(p) \|f\|_{L^{p'}([0,1]^d)}
\]
Integrating in $z\in[0,1]^d$, we get
\begin{equation}\label{dec-e12}
\|\widehat{f}\|^2_{L^2(E)} = \int_{[0,1]^d} \|\widehat{f}(a)\|_{\ell^2(S+z)} dz \leq C(p)^2 \|f\|^2_{L^{p'}([0,1]^d)}.
\end{equation}
Arguing again by duality, we have
\begin{align*}
\|\widehat{f}\|_{L^2(E)}
&= \sup_{ \|g\|_{L^{2}(E)}=1} \int g\one_{E} \widehat{f} dx\\
&= \sup_{ \|g\|_{L^{2}(E)}=1} \int \mathcal{F}(g\one_{E})\, f  dx
\end{align*}
Using (\ref{dec-e12}), and taking the supremum over $f$ with $\|f\|_{L^{p'}([0,1]^d)}\leq 1$, 
we get 
\[
\|\mathcal{F}(g\one_{E})\|_{L^p([0,1]^d)} \lesssim C(p)\|g\|_{L^{2}(E)},
\]
which is (\ref{dec-e4}) with $h=g\one_{E}$.
\end{proof}

We note that the conclusion of Lemma \ref{lemma-conversion} remains true if we assume that $h$ is supported on $S+[-1/2, 3/2]^d$ instead of $E$. This is proved by writing $h$ as a sum of $2^d$ functions supported on translates of $E$ and applying Lemma \ref{lemma-conversion} to each of them.

We can now prove our first decoupling inequality.

\begin{lemma}\label{discrete to continuous} 
Let $S\subset [N]^d$ be a $\Lambda(p)$-set as in Theorem \ref{bourgain-thm}, and let $E=S+[0,1]^d$. Let $f:\RR\to\CC$ be a function such that $g:=\widehat{f}=$ is supported on 
$E$. For each $a\in S$, let $g_a=g\one_{a+[0,1]^d}$, and define $f_a$ via $\widehat{f_a}=g_a$. Then
\begin{equation}\label{decoupling}
\|f\|^2_{L^p(w_I)}\lesssim C(p)^2 \sum_{a\in S} \|f_a\|^2_{L^p(w_I)}
\end{equation}
for any 1-cube $I$.
\end{lemma}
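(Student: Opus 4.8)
The plan is to bootstrap the single-scale estimate of Lemma~\ref{lemma-conversion} from the global $L^p([0,1]^d)$ statement to a local estimate on an arbitrary $1$-cube $I$ equipped with the weight $w_I$, and then to pass from an $L^p$ bound in terms of $\|h\|_{L^2}$ to the genuinely decoupled bound in terms of $\sum_{a\in S}\|f_a\|^2_{L^p(w_I)}$. First I would observe that $g=\widehat f$ is supported on $E=S+[0,1]^d$, which is a subset of $[0,N]^d$, so $f$ is band-limited to a set of diameter $\lesssim N$; by Lemma~\ref{lemma-conversion} applied on each unit cube (using translation invariance and periodization, exactly as in the remark following that lemma about the enlarged support $S+[-1/2,3/2]^d$), we get $\|f\|_{L^p(I')}\lesssim C(p)\|h\|_{L^2}$ for every $1$-cube $I'$, where $h$ is the appropriate localized piece of $g$. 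Summing these local estimates against the rapidly decaying weight $w_I$ — which is comparable to a constant on each unit cube and whose sum over all unit cubes is $O(1)$ — upgrades this to $\|f\|_{L^p(w_I)}\lesssim C(p)\|h\|_{L^2(\RR^d)}$.

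The second and main point is to convert the $L^2$-norm of $h$ on the right-hand side into the squared-and-summed quantity $\sum_{a\in S}\|f_a\|^2_{L^p(w_I)}$. Here I would use that $h$ (or $g$) decomposes as $\sum_{a\in S} g_a$ with the pieces $g_a$ supported on the disjoint unit cubes $a+[0,1]^d$, so by Plancherel $\|h\|_{L^2(\RR^d)}^2 = \sum_{a\in S}\|g_a\|_{L^2}^2 = \sum_{a\in S}\|f_a\|_{L^2(\RR^d)}^2$. It then remains to dominate each $\|f_a\|_{L^2(\RR^d)}$ by $\|f_a\|_{L^p(w_I)}$. Since $\widehat{f_a}=g_a$ is supported on a single $1$-cube, $f_a$ is band-limited at unit scale, hence ``locally constant'' at scale $1$ in the sense of Lemma~\ref{alt-locally-constant} (or a reverse H\"older / Bernstein argument); this lets one compare $\|f_a\|_{L^2}$ on all of $\RR^d$ with a weighted $L^p$ norm localized to any fixed $1$-cube $I$, at the cost of the weight $w_I$ absorbing the tails. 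Concretely, $\|f_a\|_{L^2(\RR^d)}^2 \lesssim \|f_a\|^2_{L^p_\sharp(w_I)}\approx \|f_a\|^2_{L^p(w_I)}$ by the reverse H\"older inequality (Lemma~\ref{bd-lemma4.2}) together with the fact that a function whose Fourier transform lives in a $1$-cube has $L^2(\RR^d)$ norm controlled by its $L^2(w_I)$ norm for any unit cube $I$.

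Assembling the pieces: starting from $\|f\|^2_{L^p(w_I)}\lesssim C(p)^2\|h\|^2_{L^2(\RR^d)} = C(p)^2\sum_{a\in S}\|f_a\|^2_{L^2(\RR^d)}\lesssim C(p)^2\sum_{a\in S}\|f_a\|^2_{L^p(w_I)}$ yields exactly \eqref{decoupling}. The routine but slightly delicate bookkeeping is in the two weighted-localization steps — showing that the globally-stated $\Lambda(p)$ inequality survives being localized to $w_I$, and that the pieces $f_a$, being band-limited to unit cubes, can have their global $L^2$ norms recaptured from $w_I$-weighted $L^p$ norms. I expect the main obstacle to be organizing these weight manipulations cleanly: one must be careful that the implicit constants stay independent of $N$ (so the $\Lambda(p)$-constant $C(p)$ is the only $p$-dependent factor that appears), which is why it is essential that the periodization and the weight-summation arguments produce only absolute constants. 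Everything else is Plancherel and an application of the reverse H\"older inequality already recorded as Lemma~\ref{bd-lemma4.2}.
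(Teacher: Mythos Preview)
Your argument has a genuine gap in the third step, where you claim that $\|f_a\|_{L^2(\RR^d)}\lesssim \|f_a\|_{L^p(w_I)}$ for a function $f_a$ whose Fourier transform is supported in a unit cube. This inequality is false. Take $g_a(\xi)=e^{-2\pi i\xi\cdot x_0}\one_{a+[0,1]^d}(\xi)$ for some $x_0\in\RR^d$; then $f_a$ is a fixed Schwartz-tailed profile translated to be centered near $x_0$, so $\|f_a\|_{L^2(\RR^d)}$ is a fixed positive constant while $\|f_a\|_{L^p(w_I)}\to 0$ as $|x_0|\to\infty$. Band-limitation to a unit cube makes $f_a$ locally constant at scale $1$, but places no restriction on \emph{where} its mass sits, so a weighted norm over a single unit cube cannot recapture the global $L^2$ norm. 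The reverse H\"older inequality (Lemma~\ref{bd-lemma4.2}) only compares $\|f_a\|_{L^p(w_I)}$ with $\|f_a\|_{L^2(w_I)}$, both carrying the same weight; it does not let you remove the weight.

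The reason the paper's proof avoids this trap is that it never passes through the global norm $\|g\|_{L^2(\RR^d)}$. Instead, one multiplies $f$ by a smooth cutoff $\sqrt{\eta_I}$ in physical space \emph{before} invoking Lemma~\ref{lemma-conversion}; this convolves $g$ by a compactly supported bump (hence the enlarged support $S+[-1/2,3/2]^d$), and Plancherel then returns $\|f\sqrt{\eta_I}\|_{L^2(\RR^d)}=\|f\|_{L^2(\eta_I)}$, which is already localized. The upshot is the estimate $\|f\|_{L^p(w_I)}\lesssim C(p)\|f\|_{L^2(w_I)}$ with the \emph{same} weight on both sides, after which the $L^2$-decoupling Lemma~\ref{bd-prop6.1} and reverse H\"older finish the job. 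In your outline the phrase ``the appropriate localized piece of $g$'' hints at this, but as written you then treat $h$ as equal to $g$ (since you use $\|h\|_{L^2}^2=\sum_a\|g_a\|_{L^2}^2$), which loses the localization and forces the impossible final step.
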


\begin{proof} 
We first rewrite the right-hand side of (\ref{decoupling}) using Lemmas \ref{bd-prop6.1} and \ref{bd-lemma4.2} with $\sigma$ equal to the Lebesgue measure. We have
$$
E_{a+[0,1]^d}g=\widehat{g_a}=f_a,\ \ \ E_{[0,N]^d}g = \widehat{g} = f,
$$
so that
$$
\|f\|^2_{L^2(w_I)}\approx  \sum_{a\in S} \|f_a\|^2_{L^2(w_I)}
\approx \sum_{a\in S}  \|f_a\|^2_{L^p(w_I)}.
$$
Therefore to prove (\ref{decoupling}), it suffices to prove that
\begin{equation}\label{dec-e2}
\|f\|^2_{L^p(w_I)}\lesssim C(p)^2  \|f\|^2_{L^2(w_I)}
\end{equation}
Let $\eta$ be a nonnegative Schwartz function such that $\eta(x)=\eta(-x)$, $\eta\geq 1$ on $[-1,1]^d$ and supp$\widehat{\sqrt{\eta}}\subset[-\frac{1}{2},\frac{1}{2}]^d$. We will prove that 
\begin{equation}\label{dec-e5}
\|f\|^2_{L^p(I)}\lesssim C(p)^2  \|f\|^2_{L^2(\eta_I)}
\end{equation}
for every 1-cube $I$.
By a covering argument \cite[Lemma 4.1]{BD-expo}, this implies (\ref{dec-e2}).

We may assume that $I=[0,1]^d$. (If $I=z+[0,1]^d]$, we may replace $f$ by $f_z=f(\cdot +z)$ and observe that 
$\widehat{f_z}(\xi)=e^{2\pi i z\cdot \xi} \widehat{f}(\xi)$ is again supported in $E$.)
Let $h=g* (\sqrt{\eta})\,\check{ }$, so that $\widehat{h}=f\sqrt{\eta}$ and $h$ is supported on $S+[-1/2,3/2]^d$. 
Since $\sqrt{\eta}\geq 1$ on $[0,1]^d$, 
we have
$\|f\|_{L^p([0,1]^d)} \leq  \|f\sqrt{\eta} \|_{L^p([0,1]^d)}$.
By Lemma \ref{lemma-conversion} and the remark after its proof applied to $h$,
\[
\|f\sqrt{\eta} \|_{L^p([0,1]^d)}  \lesssim C(p) \|h\|_{L^2(\RR)}
= \|f\sqrt{\eta} \|_{L^2(\RR)}=\|f\|_{L^2(\eta)}
\]
as claimed.

\end{proof}

\section{The Cantor set construction}\label{cantor set}

Our proof of Theorem \ref{main-thm} is based on the construction of a ``multiscale $\Lambda(p)$" Cantor set of dimension $\alpha$. Let $\alpha\in(0,d)$, $p=2d/\alpha$, and let $\{n_j\}_{j\in\NN}$ be a sequence of positive integers. For the construction of the measure $\mu$ in Theorem \ref{main-thm}, we will assume the following conditions on $n_j$:
\begin{equation}\label{baby steps}
n_1\leq n_2\leq\dots,\ \ n_k\to\infty,\ \ 
\end{equation}
\begin{equation}\label{baby epsilons}
\forall\epsilon>0 \ \exists C_\epsilon >0 \ \forall k\in\NN\ \  n_{k+1} \leq C_\epsilon (n_1\dots n_k)^\epsilon.
\end{equation}
However, large parts of our proof work under weaker assumptions. In particular, our localized restriction estimate in Lemma
\ref{random cantor measure} continues to hold if $n_j=n$ for all $j$. We also note here that in order for (\ref{baby epsilons}) to hold, it is enough to assume that (\ref{baby steps}) holds and that $n_j$ grow slowly enough, for example
\begin{equation}\label{slow growth}
\frac{n_{j+1}}{n_j}\leq \frac{j+1}{j}
\end{equation}
will suffice.

For each $j\in\NN$, let 
$$
\Sigma_j=\Sigma_j(n_j,t_j,c_0,C(p))=\{S\subset[n_j]^d: \ |S|=t_j\hbox{ and (\ref{lambda-p}) holds with }N=n_j\}
$$
By Theorem \ref{bourgain-thm}, there are $c_0,C(p)>0$ (independent of $j$) and $t_j$ with $t_j\geq c_0 n_j^{2d/p}$
such that $\Sigma_j$ is non-empty for all $j$. Henceforth, we fix these values of $c_0$, $C(p)$ and $t_j$. By the well known 
upper bounds on the size of $\Lambda(p)$ sets (see \cite{bourg-lambdaP}), we must in fact have 
\begin{equation}\label{large t}
c_0 n_j^{2d/p}\leq t_j\leq c_1 n_j^{2d/p}
\end{equation}
for some constant $c_1$ independent of $j$.

Let $N_k=n_1\dots n_k$ and $T_k=t_1\dots t_k$. We construct a Cantor set $E$ of Hausdorff dimension $\alpha$ as follows. Define
$$A_1=N_1^{-1}S_1, \ \ E_1= A_1+[0,N_1^{-1}]^d,$$
for some $S_1\in\Sigma_1$.
For every $a\in A_1$, choose a $\Lambda(p)$ set $S_{2,a}\in\Sigma_2$ with $|S_{2,a}|=t_2$, and let
$$
A_{2,a}=a+N_2^{-1}S_{2,a},\ \ 
A_2=\bigcup_{a\in A_1} A_{2,a},\ \ E_2=A_2+[0,N_2^{-1}]^d.
$$
We continue by induction. Let $k\geq 2$, and suppose that we have defined the sets $A_j$ and $E_j$, $j=1,2,\dots,k$.
For every $a\in A_k$, choose $S_{k+1,a}\in\Sigma_{k+1}$ with $|S_{k+1,a}|=t_{k+1}$, and let
$$
A_{k+1,a}=a+N_{k+1}^{-1}S_{k+1,a},\ \ 
A_{k+1}=\bigcup_{a\in A_k} A_{k+1,a},\ \ E_{k+1}=A_{k+1}+[0,N_{k+1}^{-1}]^d.
$$
This produces a sequence of sets $[0,1]^d \supset E_1\supset E_2\supset E_3\supset\dots$, where each $E_j$ consists of 
$T_j$ cubes of side length $N_j^{-1}$. For each $j$, let
$$
\mu_j=\frac{1}{|E_j|} \one_{E_j}.$$
We will identify the functions $\mu_j$ with the absolutely continuous measures $\mu_j\,dx$. It is easy to see that $\mu_j$ converge weakly as $j\to\infty$ to a probability measure $\mu$ supported on the Cantor set $E_\infty:=\bigcap_{j=1}^\infty E_j$.
We note that for each $N_j^{-1}$-cube $\tau$ of $E_j$, and for all $\ell >j$, we have $\mu_j(\tau)=\mu_\ell(\tau)=\mu(\tau)=T_j^{-1}$.

For the time being, the specific choice of the sets $A_{k,a}$ does not matter, as long as they are $\Lambda(p)$-sets of the prescribed cardinality. Our multiscale decoupling inequality in Proposition \ref{multidecoupling} and the localized restriction estimate in Corollary \ref{localized-restriction} do not require
any additional conditions. However, additional randomization of these choices will become important later in proving our global restriction estimate.

\begin{lemma}\label{dimension}
Assume that (\ref{baby steps}) and (\ref{baby epsilons}) hold.
Then the set $E_\infty$ has Hausdorff dimension $\alpha$.
Moreover, for every $0\leq \gamma<\alpha$ there is a constant $C_1(\gamma)$ such that
\begin{equation}\label{main-e1b}
\mu(B(x,r))\leq C_1(\gamma) r^\gamma \ \ \ \forall x\in\RR^d,\ r>0.
\end{equation}
\end{lemma}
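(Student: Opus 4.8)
The plan is to reduce everything to two elementary facts about the counting data. Write $p=2d/\alpha$, so $2d/p=\alpha$ and (\ref{large t}) becomes $c_0 n_j^\alpha\le t_j\le c_1 n_j^\alpha$; multiplying over $j\le k$ gives $c_0^k N_k^\alpha\le T_k\le c_1^k N_k^\alpha$. The first fact I would establish is that the geometric constants $c_0^{-k}$ and $c_1^k$ are \emph{subpolynomial} in $N_k$: for every $\eta>0$ there is $C_\eta$ with $c^k\le C_\eta N_k^\eta$ for all $k$, where $c=\max(c_0^{-1},c_1,2)$. This uses only (\ref{baby steps}): since $n_k\to\infty$, for each $M$ there is $k_M$ with $n_j\ge M$ for $j\ge k_M$, so $N_k\ge M^{\,k-k_M}$ and hence $k\le k_M+\log N_k/\log M$ for $k\ge k_M$; taking $M$ so large that $\log c/\log M\le\eta$ and absorbing the finitely many $k<k_M$ into $C_\eta$ proves it. Consequently $N_k^{\alpha-\eta}\lesssim_\eta T_k\lesssim_\eta N_k^{\alpha+\eta}$ for every $\eta>0$.

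For the upper bound $\dim_H E_\infty\le\alpha$: $E_\infty\subset E_k$, which is covered by $T_k$ cubes of side $N_k^{-1}$, hence of diameter $\sqrt d\,N_k^{-1}$. Given $s>\alpha$, put $\eta=(s-\alpha)/2$; then the $s$-dimensional Hausdorff sum over this cover is $\le d^{s/2}T_kN_k^{-s}\lesssim_\eta N_k^{\alpha+\eta-s}=N_k^{-(s-\alpha)/2}\to 0$, and since $\mathcal H^s_\delta$ is monotone in $\delta$ and $\sqrt d\,N_k^{-1}\to0$, this forces $\mathcal H^s(E_\infty)=0$. Hence $\dim_H E_\infty\le s$ for every $s>\alpha$.

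Next I would prove (\ref{main-e1b}); by the mass distribution principle it also yields $\dim_H E_\infty\ge\gamma$ for every $\gamma<\alpha$, which completes the dimension statement. Fix $0\le\gamma<\alpha$, the case $\gamma=0$ being trivial. For $r\ge N_1^{-1}$ one has $\mu(B(x,r))\le 1\le N_1^\gamma r^\gamma$, so assume $0<r<N_1^{-1}$ and let $k\ge1$ be the unique index with $N_{k+1}^{-1}\le r<N_k^{-1}$ (it exists since $N_k\to\infty$). Since $A_k\subset N_k^{-1}\ZZ^d$, the $N_k^{-1}$-cubes making up $E_k$ belong to the standard grid of mesh $N_k^{-1}$, so a ball of radius $r<N_k^{-1}$ meets at most $3^d$ of them; as $\mu(\tau)=T_k^{-1}$ for each such cube (noted just after the construction), $\mu(B(x,r))\le 3^dT_k^{-1}$. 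Now choose $\eta,\epsilon>0$ with $\eta+\gamma\epsilon\le\alpha-\gamma$ (possible because $\alpha>\gamma$). The subpolynomial bound gives $T_k^{-1}\le c_0^{-k}N_k^{-\alpha}\lesssim_\eta N_k^{\eta-\alpha}$, while (\ref{baby epsilons}) gives $r^\gamma\ge N_{k+1}^{-\gamma}=N_k^{-\gamma}n_{k+1}^{-\gamma}\ge C_\epsilon^{-\gamma}N_k^{-\gamma-\gamma\epsilon}$. Since $\eta-\alpha\le-\gamma-\gamma\epsilon$ and $N_k\ge1$, we obtain $\mu(B(x,r))\lesssim_\eta N_k^{\eta-\alpha}\le N_k^{-\gamma-\gamma\epsilon}\lesssim_\epsilon r^\gamma$, with the constant depending only on $\gamma$, $d$, $p$.

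The only place any care is needed is the first step — absorbing $c_0^{-k}$ and $c_1^k$ into $N_k^\eta$ — together with keeping track of exactly where the hypotheses enter: (\ref{baby steps}), via $n_k\to\infty$, is what makes these geometric factors subpolynomial, and (\ref{baby epsilons}) is precisely what keeps the single factor $n_{k+1}$ from spoiling the comparison of $T_k^{-1}$ with $r^\gamma$ on the scale window $N_{k+1}^{-1}\le r<N_k^{-1}$. Everything else is the routine Cantor-measure dimension computation.
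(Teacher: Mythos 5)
Your proof is correct and follows essentially the same route as the paper: derive subpolynomial estimates $N_k^{\alpha-\eta}\lesssim_\eta T_k\lesssim_\eta N_k^{\alpha+\eta}$ from (\ref{large t}), (\ref{baby steps}), (\ref{baby epsilons}) (the paper records this as (\ref{t vs n})), use the $T_k$ cubes of side $N_k^{-1}$ to bound the Hausdorff dimension above, and use the bounded-overlap counting plus the comparison of scales $N_{k+1}^{-1}\le r<N_k^{-1}$ to get (\ref{main-e1b}), with the mass distribution principle supplying the lower dimension bound. You spell out the absorption of $c_0^{-k},c_1^k$ into $N_k^\eta$ in more detail than the paper does, but the underlying argument is the same.
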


\begin{proof}
We first note that (\ref{baby steps}), (\ref{baby epsilons}) and (\ref{large t}) imply that
\begin{equation}\label{t vs n}
N_{j+1}^{\alpha-2\epsilon} \lesssim_\epsilon N_{j}^{\alpha-\epsilon} \lesssim_\epsilon 
T_j \lesssim_\epsilon 
N_{j}^{\alpha+\epsilon} \lesssim_\epsilon 
N_{j-1}^{\alpha+2\epsilon}.
\end{equation}
Indeed, from (\ref{large t}) we have $c_0^j N_j^\alpha \leq T_j \leq c_1^j N_j^\alpha$, which implies  
$N_{j}^{\alpha-\epsilon} \lesssim_\epsilon T_j \lesssim_\epsilon N_{j}^{\alpha+\epsilon} $ by (\ref{baby steps}). The remaining two inequalities in (\ref{t vs n}) follow from (\ref{baby epsilons}).

We first prove (\ref{main-e1b}). If $r>N_1^{-1}$, then (\ref{main-e1b}) holds trivially with $C_1=N_1^\gamma$.  Suppose now that $N_{j+1}^{-1}<r\leq N_j^{-1}$ for some $j\geq 1$. Then any ball $B(x,r)$ intersects at most a bounded number of the $N_j^{-1}$-cubes of $E_j$, so that $\mu(B(x,R))\lesssim T_j^{-1}\lesssim_\gamma N_{j+1}^{-\gamma}\leq r^{\gamma}$ for all $\gamma<\alpha$ (the second inequality in the sequence follows from (\ref{t vs n})).

To prove the dimension statement, we only need to show that $E_\infty$ has Hausdorff dimension at most $\alpha$, since the lower bound is provided by (\ref{main-e1b}). To this end, it suffices to check that for every $\epsilon>0$, and for all $r>0$, the set $E_\infty$ can be covered by $C_\epsilon r^{-\alpha-\epsilon}$ balls of radius $r$. Again, it suffices to consider $r>N_1^{-1}$. Suppose that $N_{j+1}^{-1}<r\leq N_j^{-1}$, then $E_\infty\subset E_{j+1}$ can be covered by $\lesssim T_{j+1}$ balls of radius $N_{j+1}^{-1}$, hence also of radius $r$. Since $T_{j+1}\lesssim_\epsilon N_{j}^{\alpha+\epsilon}\leq r^{-\alpha-\epsilon}$, the desired bound follows.
\end{proof}


\section{Multiscale decoupling}\label{multidec-section}

Our goal in this section is to derive the following multiscale decoupling inequality for finite iterations of Cantor sets. For $a\in A_k$, let $\tau_{k,a}=a+[0,N_k^{-1}]^d$. If $f:\RR^d\to\CC$ is a function, we define $f_{k,a}$ via $\widehat{f_{k,a}}= \one_{\tau_{k,a}} \widehat{f}$.

\begin{proposition}\label{multidecoupling}
There is a constant $C_0(p)$ (independent of $k$) such that for any $N_k$-cube $J$, and for any function $f$ with supp$\widehat{f}\subseteq E_k$, we have 
\begin{equation}\label{multi-e}
\left( \sum_{I\in\mathcal{I}} \|f\|^p_{L^p(w_I)}\right)^{1/p} 
\leq C_0(p)^{k} \left(\sum_{a\in A_k} \|f_{k,a}\|_{L^p(w_J)}^2\right)^{1/2},
\end{equation}
where $J=\bigcup_{I\in\mathcal{I}}I$ is a tiling of $J$ by 1-cubes.
\end{proposition}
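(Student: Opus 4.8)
### Proof Proposal

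The plan is to prove this by induction on $k$, using the single-scale decoupling inequality of Lemma~\ref{discrete to continuous} as the engine for each step. The base case $k=1$ is essentially Lemma~\ref{discrete to continuous}: after rescaling the Cantor construction at scale $N_1$ back to the unit cube (so that $E_1$ becomes $E=S_1+[0,1]^d$ sitting inside $[0,N_1]^d$), the function $f$ with $\widehat{f}$ supported on $E_1$ corresponds to a function whose Fourier support is $S_1+[0,1]^d$, and Lemma~\ref{discrete to continuous} gives $\|f\|^2_{L^p(w_I)} \lesssim C(p)^2 \sum_{a\in S_1}\|f_a\|^2_{L^p(w_I)}$ for each $1$-cube $I$. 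One then has to combine this over the $1$-cubes $I$ tiling $J$, convert the $\ell^2$-over-frequency-pieces sum appropriately, and account for the scaling; this should produce (\ref{multi-e}) with $C_0(p)$ a fixed multiple of $C(p)$.

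For the inductive step, suppose (\ref{multi-e}) holds with $k$ replaced by $k-1$. Given $f$ with $\operatorname{supp}\widehat{f}\subseteq E_k$, first apply the single-scale decoupling at the finest scale: each $N_{k-1}^{-1}$-cube $\tau_{k-1,b}$ of $E_{k-1}$ contains (after rescaling by $N_{k-1}$) a copy of $S_{k,b}+[0,1]^d$, a $\Lambda(p)$-set configuration, so Lemma~\ref{discrete to continuous} — applied in the rescaled coordinates, using that decoupling inequalities are invariant under the appropriate parabolic/affine rescaling and that $w_I$ transforms correctly — lets us decouple $f_{k-1,b}$ into its pieces $f_{k,a}$ for $a\in A_{k,b}$, at a cost of $C_0(p)$. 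Summing these local decouplings over $b\in A_{k-1}$ in $\ell^2$ and using the $L^2$ orthogonality (Lemma~\ref{bd-prop6.1}) together with the reverse Hölder inequality (Lemma~\ref{bd-lemma4.2}) to pass between the $\ell^2$ sum over the $b$'s and the function $f$ itself, we obtain
\begin{equation*}
\Big(\sum_{b\in A_{k-1}}\|f_{k-1,b}\|^2_{L^p(w_J)}\Big)^{1/2}
\lesssim C_0(p)\,\Big(\sum_{a\in A_k}\|f_{k,a}\|^2_{L^p(w_J)}\Big)^{1/2}.
\end{equation*}
Composing this with the inductive hypothesis (applied to $f$, whose Fourier support also lies in $E_{k-1}\supseteq E_k$, with $J$ an $N_{k-1}$-cube — here one must cover an $N_k$-cube by $N_{k-1}$-cubes and sum, which the weights $w_I$ are designed to handle) yields (\ref{multi-e}) with the constant $C_0(p)^{k-1}\cdot C_0(p)=C_0(p)^k$, closing the induction.

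The main obstacle I anticipate is bookkeeping the scalings and the weights consistently: the single-scale lemma is stated for $1$-cubes and frequency sets in $[N]^d$, whereas at the $j$-th stage of the Cantor construction the relevant frequency blocks live at scale $N_j$ and the physical cubes at scale $N_j$, so each application requires a rescaling $x\mapsto N_j x$ (or its inverse) under which one must verify that (i) the Fourier support hypothesis is preserved, (ii) the weight $w_I$ for a $1$-cube rescales to a comparable weight for an $N_j$-cube (this uses the standard fact that $w_I$ is, up to constants, stable under such dilations when combined with a covering argument as in \cite[Lemma 4.1]{BD-expo}), and (iii) the passage between $\|f\|^2_{L^2(w_I)}$ and $\sum_a\|f_a\|^2_{L^2(w_I)}$ via Lemmas~\ref{bd-prop6.1}--\ref{bd-lemma4.2} is legitimate at every intermediate scale. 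A secondary point is that the left side of (\ref{multi-e}) is an $\ell^p$ sum over unit cubes $I$ while the right side is an $\ell^2$ sum over frequency blocks localized to a single cube $J$; reconciling these requires using $p\ge 2$ (so $\ell^p\subseteq\ell^2$ fails in the wrong direction — rather one uses that the decoupling is into an $\ell^2$ norm, and the $\ell^p$-sum over $I$ is controlled because each local decoupling is uniform in $I$) and the near-translation-invariance of the weighted norms. None of these steps is deep, but assembling them without losing track of the constant's dependence on $k$ (it must be exactly geometric, $C_0(p)^k$, with no extra $k$-dependent factors) is where care is needed.
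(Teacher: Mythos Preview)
Your outline is essentially the paper's own argument: induction on $k$, with the single-scale Lemma~\ref{discrete to continuous} (suitably rescaled) furnishing the passage from level $k-1$ to level $k$, and the weight comparison $\sum_{L\in\mathcal{L}} w_L \lesssim w_J$ handling the tiling of the $N_k$-cube $J$ by $N_{k-1}$-cubes $L$.

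There is one genuine step you gloss over. After applying the inductive hypothesis on each $N_{k-1}$-cube $L$ and summing in $\ell^p$ over $L\in\mathcal{L}$, you arrive at
\[
\sum_{L\in\mathcal{L}}\Big(\sum_{b\in A_{k-1}}\|f_{k-1,b}\|_{L^p(w_L)}^2\Big)^{p/2},
\]
and you must swap the two sums to reach $\big(\sum_b(\sum_L\|f_{k-1,b}\|_{L^p(w_L)}^p)^{2/p}\big)^{p/2}$ before the weight comparison can be applied termwise in $b$. This swap is not ``handled by the weights''; it is Minkowski's inequality in $\ell^{p/2}$ (valid since $p\geq 2$), which the paper isolates as a separate mixed-norms lemma. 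Your discussion around ``$\ell^p\subseteq\ell^2$ fails in the wrong direction'' and ``each local decoupling is uniform in $I$'' circles this point without naming it. Once you insert this Minkowski step explicitly, the induction closes exactly as you describe, with a constant that is precisely geometric in $k$. Your invocation of Lemmas~\ref{bd-prop6.1}--\ref{bd-lemma4.2} in the inductive step is unnecessary: those are already absorbed into the proof of Lemma~\ref{discrete to continuous} and need not be reapplied.
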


\begin{proof}
The idea is to iterate Lemma \ref{discrete to continuous}. 
Applying it to the set $N_1\cdot E_1$ and a rescaling of $f$ by $N_1$, 
we see that there is a constant $C_1(p)$ such that for any function  $f$ with supp $\widehat{f}\subseteq E_1$, and for any $N_1$-cube $J$, we have
\begin{equation} \label{best ball}
\|f\|^2_{L^p(w_{J})}\leq C_1(p)^2 \sum_{a\in A_1} \|f_{1,a}\|_{L^p(w_{J})}^2 .
\end{equation}
Similarly, applying Lemma~\ref{discrete to continuous} to a rescaling of $f_{j,a}$ by $N_{j+1}$ for each $a\in  A_j$, we see that for any $N_{j+1}$-cube $J$ and for any function  $f$ with supp $\widehat{f}\subseteq E_{j+1}$
we have
\begin{equation} \label{best big ball}
\|f_{j,a}\|^2_{L^p(w_{J})}\leq C_1(p)^2 \sum_{b\in A_{j+1,a}} \|f_{j+1,b}\|_{L^p(w_{J})}^2 
\end{equation}
with the same constant $C_1(p)$.

To connect the steps of the iteration, we will need a simple lemma on mixed norms.

\begin{lemma}\label{mixed norms}
Let $\{c_{ij}\}$ be a double-indexed sequence (finite or infinite) with $c_{ij}\geq 0$. Then for $p>2$,
\begin{equation}\label{md-e1}
\sum_i \Big( \sum_j c_{ij}^2\Big)^{p/2} 
\leq \Big( \sum_j \Big( \sum_i c_{ij}^p\Big)^{2/p} \Big)^{p/2}.
\end{equation}
\end{lemma}

\begin{proof}
Let $F_j(i)=c_{ij}^2$, and $G(i)=\sum_j F_j(i)=\sum_{j}c_{ij}^2$, so that 
$$
\|G\|_{p/2} = \Big( \sum_i \Big( \sum_j c_{ij}^2\Big)^{p/2} \Big)^{2/p}.
$$
On the other hand, by Minkowski's inequality
$$
\|G\|_{p/2}\leq \sum_j \|F_j(i)\|_{p/2} = \sum_j \Big( \sum_i c_{ij}^p\Big)^{2/p},
$$
and the lemma follows.
\end{proof}

We will prove (\ref{multi-e}) by induction in $k$. For an $m$-cube $J$ with $m\in\NN$, let $J=\bigcup_{I\in \mathcal{I}(J)} I$ be a tiling of $J$ by 1-cubes. Let $C_2$ be a constant such that
\begin{equation}\label{weight-compare}
\sum_{I\in \mathcal{I}(J)} w_I \leq C_2 w_J.
\end{equation}
It is easy to see that such a constant exists and can be chosen independently of $|J|$. We will prove that (\ref{multi-e}) holds with $C_0(p)=C_1(p)C_2^{1/p}$.

To start the induction, let $J$ be an $N_1$-cube, then by (\ref{weight-compare}) and (\ref{best ball}),
\begin{align*}
\sum_{I\in\mathcal{I}(J)} \|f\|^p_{L^p(w_I)}
&\leq C_2 \|f\|^p_{L^p(w_{J})}\\
&\leq C_1(p)^p C_2 \Big( \sum_{a\in A_1} \|f_{1,a}\|_{L^p(w_{J})}^2 \Big)^{p/2}
\end{align*}
This is (\ref{multi-e}) for $k=1$. Suppose now that we have proved 
(\ref{multi-e}) for $k=j$. Let $J$ be an $N_{j+1}$-cube, and let $J=\bigcup_{L\in \mathcal{L}} I$ be a tiling of $J$ by $N_j$-cubes. Then
\begin{align*}
\sum_{I\in\mathcal{I}(J)} \|f\|^p_{L^p(w_I)}
&= \sum_{L\in\mathcal{L}} \sum_{I\in\mathcal{I}(L)} \|f\|^p_{L^p(w_I)}\\
&\leq C_0(p)^{jp} \sum_{L\in\mathcal{L}} \Big( \sum_{a\in A_j} \|f_{j,a}\|_{L^p(w_{L})}^2 \Big)^{p/2}
\end{align*}
by our inductive assumption. 
Using Lemma \ref{mixed norms}, a rescaling of (\ref{weight-compare}), and (\ref{best big ball}), we see that
\begin{align*}
\sum_{I\in\mathcal{I}(J)} \|f\|^p_{L^p(w_I)}
&\leq C_0(p)^{jp} \left[ \sum_{a\in A_j} \left( \sum_{L\in\mathcal{L}} \|f_{j,a}\|_{L^p(w_{L})}^p \right)^{2/p} \right]^{p/2}
\\
&\leq C_0(p)^{jp} C_2 \left[ \sum_{a\in A_j}  \|f_{j,a}\|_{L^p(w_{J})}^2  \right]^{p/2}
\\
&\leq C_0(p)^{jp} C_2 C_1(p) \left[ \sum_{a\in A_{j+1}}  \|f_{j+1,a}\|_{L^p(w_{J})}^2  \right]^{p/2}.
\end{align*}
This ends the inductive step and the proof of the proposition.

\end{proof} 


\section{From decoupling to localized restriction}

\begin{lemma} \label{random cantor measure}
Let $E_k$ and $\mu_k$ be as in Section \ref{multidec-section}. 
Let $J$ be an $N_k$-cube. Then
for all $g\in L^2(d\mu)$, we have
$$\|\widehat{g d\mu}\|_{L^p(J)}\lesssim C_0(p)^k N_k^{d/p} \,T_k^{-1/2} \|g\|_{L^2(d\mu)}$$
with the implicit constants independent of $k$.
\end{lemma}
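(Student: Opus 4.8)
The plan is to deduce the $L^p$ bound for $\widehat{g\,d\mu}$ on a single $N_k$-cube $J$ from the multiscale decoupling inequality of Proposition~\ref{multidecoupling}, by taking $f$ to be (essentially) $\widehat{g\,d\mu_k}$ and tracking the various normalizations. First I would set $F=\widehat{g\,d\mu_k}$, so that $\widehat F = g\,d\mu_k$ is supported on $E_k$ (since $\mu_k$ is supported on $E_k$), which makes Proposition~\ref{multidecoupling} applicable. The left-hand side of \eqref{multi-e} controls $\|F\|_{L^p(J)}$ from below: tiling $J$ by $1$-cubes $I\in\mathcal I$ and using $w_I\gtrsim \one_I$ gives $\|F\|_{L^p(J)}^p \le \sum_{I\in\mathcal I}\|F\|_{L^p(w_I)}^p$, so it suffices to bound the right-hand side $C_0(p)^k\big(\sum_{a\in A_k}\|F_{k,a}\|_{L^p(w_J)}^2\big)^{1/2}$, where $\widehat{F_{k,a}}=\one_{\tau_{k,a}}g\,d\mu_k$.

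The second step is to estimate each piece $\|F_{k,a}\|_{L^p(w_J)}$. Here $F_{k,a}$ has Fourier support in a single $N_k^{-1}$-cube $\tau_{k,a}$, so it is ``locally constant'' at scale $N_k$; in particular, by the reverse H\"older inequality (Lemma~\ref{bd-lemma4.2}, applied with a rescaling so that $\tau_{k,a}$ plays the role of a $1/R$-cube with $R=N_k$ and $J$ an $N_k$-cube), we may pass from $L^p(w_J)$ to $L^2(w_J)$:
\begin{equation}\label{plan-rh}
\|F_{k,a}\|_{L^p_\sharp(w_J)}\lesssim \|F_{k,a}\|_{L^2_\sharp(w_J)}.
\end{equation}
Converting between the $\sharp$-normalized norms and the ordinary ones costs a factor of $|J|^{1/p-1/2}=N_k^{d/p-d/2}$. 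Then $\|F_{k,a}\|_{L^2(w_J)}$ can be bounded — up to the rapidly decaying tails of $w_J$ — by $\|F_{k,a}\|_{L^2(\RR^d)}$, which by Plancherel equals $\|\one_{\tau_{k,a}}g\,d\mu_k\|_{L^2(\RR^d)}$. Since $\mu_k$ has constant density $|E_k|^{-1}=N_k^{dk}/T_k \cdot$ (up to the precise normalization $\mu_k=|E_k|^{-1}\one_{E_k}$, so $\mu_k = (T_k N_k^{-dk})^{-1}\one_{E_k}$ has density $T_k^{-1}N_k^{dk}\cdot$ — I should double-check: $|E_k|=T_k N_k^{-dk}$, so the density is $T_k^{-1}N_k^{dk}$) and $\one_{\tau_{k,a}}\one_{E_k}$ has measure $N_k^{-dk}\cdot$(number of $N_k^{-1}$-cubes of $E_k$ inside $\tau_{k,a}$)$=N_k^{-dk}$, one computes $\|F_{k,a}\|_{L^2(\RR^d)}^2 = \int_{\tau_{k,a}\cap E_k}|g|^2\,(T_k^{-1}N_k^{dk})^2\,dx$, which should be reorganized as $T_k^{-1}N_k^{dk}\int|g|^2\,d\mu_k$ restricted to the relevant cube. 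Summing over $a\in A_k$ then produces $\sum_a\|F_{k,a}\|_{L^2(\RR^d)}^2 \lesssim T_k^{-1}N_k^{dk}\|g\|_{L^2(d\mu_k)}^2$, and combining with the $N_k^{d/p-d/2}$ and $|J|^{1/2}=N_k^{d/2}$ factors from the norm conversions yields the claimed $C_0(p)^k N_k^{d/p}T_k^{-1/2}\|g\|_{L^2(d\mu)}$ after noting $\|g\|_{L^2(d\mu_k)}$ agrees with $\|g\|_{L^2(d\mu)}$ on functions defined on the Cantor set (or can be compared using $\mu_k(\tau)=\mu(\tau)$).

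The main obstacle I anticipate is bookkeeping of the powers of $N_k$ through the three conversions — (i) from $\|\cdot\|_{L^p(w_J)}$ and $\|\cdot\|_{L^2(w_J)}$ to the scale-invariant $L^p_\sharp, L^2_\sharp$ norms on the $N_k$-cube $J$; (ii) the reverse H\"older step \eqref{plan-rh}, which requires checking that $F_{k,a}$ genuinely has the right Fourier localization so that the rescaled Lemma~\ref{bd-lemma4.2} applies; and (iii) the passage from the weighted $L^2(w_J)$ norm to the global $L^2(\RR^d)$ norm, where one must absorb the weight tails — this is routine but needs $F_{k,a}$ to be suitably spread out, which again follows from its narrow Fourier support. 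A secondary point is making sure the normalization of $\mu_k$ (total mass one, density $T_k^{-1}N_k^{dk}$) is threaded consistently so that exactly the factor $T_k^{-1/2}$ emerges and not, say, $T_k^{-1/2}N_k^{\pm dk/2}$; the $N_k^{dk}$ contributions from the density must cancel against the Lebesgue measure of $\tau_{k,a}\cap E_k$. Once the exponents are verified on a test computation (e.g. $g\equiv 1$, which should give $\|\widehat{\mu_k}\|_{L^p(J)}$ of the expected size), the rest is a direct substitution into Proposition~\ref{multidecoupling}.
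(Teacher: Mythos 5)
Your computation is clean for the level-$k$ truncation, but it proves the wrong statement: you estimate $\|\widehat{g\,d\mu_k}\|_{L^p(J)}$, whereas the lemma concerns $\|\widehat{g\,d\mu}\|_{L^p(J)}$. These are different functions, and the passage from one to the other is precisely where the work is. The paper's proof runs the same decoupling argument, but for $\mu_\ell$ with $\ell>k$ \emph{arbitrary}, proving an estimate whose constant is \emph{independent of $\ell$}; it then sends $\ell\to\infty$ using the weak convergence $\mu_\ell\rightharpoonup\mu$. Your argument, rerun with $\mu_\ell$ in place of $\mu_k$, does not give an $\ell$-independent constant: the step where you pass from $L^2(w_J)$ to $L^2(\RR^d)$ via $w_J\le 1$ yields
$$\|\widehat{g\,d\mu_\ell}\|_{L^p(J)}\lesssim C_0(p)^k\,N_k^{\frac dp-\frac d2}\Big(\tfrac{N_\ell^d}{T_\ell}\Big)^{1/2}\|g\|_{L^2(\mu_\ell)},$$
and the factor $N_k^{-d/2}(N_\ell^d/T_\ell)^{1/2}$ in place of $T_k^{-1/2}$ is off by $(|E_k|/|E_\ell|)^{1/2}\to\infty$. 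So simply "absorbing the tails of $w_J$" loses too much when $\ell>k$.

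What closes the gap in the paper is a further decomposition inside each $\tau_{k,a}$: split $F_{k,a}$ into the pieces $F_{\ell,b}$ indexed by the level-$\ell$ descendants $b\in B_{\ell,a}$, apply Cauchy--Schwarz over the $T_\ell/T_k$ descendants (cost: $(T_\ell/T_k)^{1/2}$), and then invoke Corollary~\ref{small intervals} on each $F_{\ell,b}$. Because $\widehat{F_{\ell,b}}$ lives on a single $N_\ell^{-1}$-cube while $J$ has side $N_k$, that corollary gives $\|F_{\ell,b}\|_{L^2(w_J)}^2\lesssim(N_k/N_\ell)^d\|F_{\ell,b}\|_{L^2(\RR^d)}^2$; this gain of $(N_k/N_\ell)^{d/2}$ exactly cancels the excess, and after Plancherel the $\ell$-dependent factors collapse to $N_k^{d/p}T_k^{-1/2}$. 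In short: the approach is right in spirit, but you are missing the decomposition over $B_{\ell,a}$ together with the localization gain from Corollary~\ref{small intervals}, and without these the constants blow up in $\ell$ and the limit $\ell\to\infty$ cannot be taken.
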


\begin{proof}
It suffices to prove that for all $\ell>k$, and for all $g\in L^2(\RR)$ supported on $E_\ell$, we have
$$\|\widehat{g d\mu_{\ell}}\|_{L^p(J)}\lesssim C_0(p)^k N_k^{d/p} \,T_k^{-1/2} \|g\|_{L^2(d\mu_{\ell})}$$
with the implicit constants independent of $k$ and $\ell$. The claim then follows by taking the limit $\ell\to\infty$.

We continue to use the Cantor set notation from Sections \ref{cantor set} and \ref{multidec-section}. For $a\in A_j$, let
$g_{j,a}=\one_{a+[0,+N_j^{-1}]^d}g$. 
By Proposition \ref{multidecoupling} and Lemma \ref{bd-lemma4.2}, we have
\begin{align*}
\|\widehat{gd\mu_{\ell}}\|_{L^p(J)} & 
\lesssim C_0(p)^k \Big(\sum_{a\in A_k} \|\widehat{g_{k,a}d\mu_{\ell}}\|_{L^p(w_J)}^2\Big)^{1/2}\\
& \approx  C_0(p)^k N_k^{\frac{d}{p}-\frac{d}{2}} \Big(\sum_{a\in A_k} \|\widehat{g_{k,a} d\mu_{\ell}} \|_{L^2(w_J)}^2\Big)^{1/2}
\end{align*}
For each $a\in A_k$, let $B_{\ell,a}$ be the set of $\ell$-th level ``descendants" of $a$ (more precisely, $B_{\ell,a}=\{b\in A_\ell:\ b+[0,N_\ell^{-1}]^d\subset a+[0,N_k^{-1}]^d\}$. Note that $|B_{\ell,a}|=T_\ell/T_k$. By Cauchy-Schwartz,
$$ 
\|\widehat{g_{k,a} d\mu_{\ell}} \|_{L^2(w_J)}
\leq \sum_{b\in B_{\ell,a}} \|\widehat{g_{\ell,b} d\mu_{\ell}} \|_{L^2(w_J)}
\leq \left(\frac{T_\ell}{T_k}\right)^{1/2}
 \Big(\sum_{b\in B_{\ell,a}} \|\widehat{g_{\ell,b} d\mu_{\ell}} \|_{L^2(w_J)}^2\Big)^{1/2}
$$
so that
\begin{align*}
\|\widehat{gd\mu_{\ell}}\|_{L^p(J)} 
&\lesssim C_0(p)^k N_k^{\frac{d}{p}-\frac{d}{2}}\left(\frac{T_\ell}{T_k}\right)^{1/2}
\Big(\sum_{b\in A_\ell} \|\widehat{g_{\ell,b} d\mu_{\ell} } \|_{L^2(w_J)}^2\Big)^{1/2} \\
&\lesssim C_0(p)^k N_k^{\frac{d}{p}-\frac{d}{2}}\left(\frac{T_\ell}{T_k}\right)^{1/2}
\left(\frac{N_k}{N_\ell}\right)^{d/2}
\Big(\sum_{b\in A_\ell } \|\widehat{g_{\ell,b} d\mu_{\ell} } \|_{L^2(\RR)}^2\Big)^{1/2}.
\end{align*}
At the last step, we applied Corollary \ref{small intervals} to the functions $(g_{\ell,b} d\mu_\ell)(\cdot)* (g_{\ell,b} d\mu_\ell) (-\,\cdot)$ supported on $2N_\ell^{-1}$-cubes.

Since
\begin{align*}
\sum_{b\in A_\ell} \|\widehat{g_{\ell,b} d\mu_{\ell} } \|_{L^2(\RR)}^2
&=  \sum_{b\in A_\ell} \|{g_{\ell,b} d\mu_{\ell} } \|_{L^2(\RR)}^2\\
&= \|{g d\mu_{\ell} } \|_{L^2(\RR)}^2=  N_\ell^d T_\ell^{-1} \|g \|_{L^2(\mu_\ell)}^2
\end{align*}
we finally have
\begin{align*}
\|\widehat{gd\mu_{\ell}}\|_{L^p(J)} 
&\lesssim 
C_0(p)^k N_k^{\frac{d}{p}-\frac{1}{2}}\left(\frac{T_\ell}{T_k}\right)^{1/2}
\left(\frac{N_k}{N_\ell}\right)^{d/2} \left(\frac{N_\ell^d}{T_\ell}\right)^{1/2}
 \|g \|_{L^2(\mu_\ell)}\\
&=
C_0(p)^k N_k^{d/p}\, T_k^{-1/2} \|g\|_{L^2(d\mu_\ell)}
\end{align*}
as claimed.
\end{proof}

\begin{corollary}\label{localized-restriction} {\bf (Localized restriction estimate)}
Assume that  
(\ref{baby steps}) holds. Then for any $\epsilon>0$ we have the estimate
\begin{equation}\label{loc-e1}
\|\widehat{g d\mu}\|_{L^p(J)}\leq C_\epsilon R^\epsilon \|g\|_{L^2(d\mu)}.
\end{equation}
for all $R\geq n_1$ and for all $R$-cubes $J$. The constant $C_\epsilon$ depends on $\epsilon$, but not on $g$, $R$ or $J$.
Equivalently, for any $f$ supported in $J$, we have 
\begin{equation}\label{loc-e2}
\|\widehat{f}\|_{L^2(d\mu)} \leq C_\epsilon R^\epsilon \|f\|_{L^{p'}(J)}.
\end{equation}
\end{corollary}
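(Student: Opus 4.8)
The plan is to deduce the localized estimate (\ref{loc-e1}) from Lemma \ref{random cantor measure} by fitting an arbitrary $R$-cube inside a \emph{single} $N_k$-cube for a suitably chosen index $k=k(R)$, and then checking that the factor $C_0(p)^k N_k^{d/p}T_k^{-1/2}$ produced by that lemma is $\lesssim_\epsilon R^\epsilon$. Since $p=2d/\alpha$ we have $d/p=\alpha/2$, and (\ref{large t}) gives $T_k=t_1\cdots t_k\geq c_0^{\,k}N_k^{\alpha}$; hence $N_k^{d/p}T_k^{-1/2}=N_k^{\alpha/2}T_k^{-1/2}\leq c_0^{-k/2}$, and Lemma \ref{random cantor measure} yields
\[
\|\widehat{g\,d\mu}\|_{L^p(\widetilde J)}\lesssim C^{\,k}\,\|g\|_{L^2(d\mu)}\qquad\text{for every $N_k$-cube }\widetilde J,\quad C:=C_0(p)\,c_0^{-1/2}.
\]
We may assume $C\geq1$, since otherwise $C^{\,k}\leq1$ and everything below is trivial.

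Given $R\geq n_1=N_1$, let $k=k(R)$ be the smallest index with $N_k\geq R$; this exists because $N_k\to\infty$, and (with the convention $N_0=1$) we have $N_{k-1}\leq R\leq N_k$. Any $R$-cube $J$ lies in the concentric $N_k$-cube $\widetilde J$, so by monotonicity of the $L^p$-norm in the domain and the display above,
\[
\|\widehat{g\,d\mu}\|_{L^p(J)}\leq\|\widehat{g\,d\mu}\|_{L^p(\widetilde J)}\lesssim C^{\,k}\,\|g\|_{L^2(d\mu)}.
\]
It therefore remains to show $C^{\,k}\lesssim_\epsilon N_{k-1}^{\epsilon}$, since $N_{k-1}\leq R$. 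This is the one place where hypothesis (\ref{baby steps}) enters, and it is the heart of the matter: because $n_j\to\infty$, for any $\delta>0$ there is $k_0=k_0(\delta)$ with $\log n_j\geq1/\delta$ for $j\geq k_0$, so $\log N_{k-1}=\sum_{j=1}^{k-1}\log n_j\geq(k-k_0)/\delta$, i.e. $k\leq\delta\log N_{k-1}+k_0$. Choosing $\delta=\epsilon/\log C$ gives $k\log C\leq\epsilon\log N_{k-1}+k_0\log C$, that is $C^{\,k}\leq C^{\,k_0}N_{k-1}^{\epsilon}$, which is exactly what is needed; this proves (\ref{loc-e1}).

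The equivalence with (\ref{loc-e2}) is the standard duality between the extension operator $g\mapsto\widehat{g\,d\mu}$ on $L^2(d\mu)$ and the restriction operator $f\mapsto\widehat f|_{\mathrm{supp}\,\mu}$: for $f$ supported in $J$, Fubini's theorem gives $\langle\widehat f,g\rangle_{L^2(d\mu)}=\int_J f(\xi)\,\overline{\widehat{g\,d\mu}(-\xi)}\,d\xi$, so applying Hölder's inequality, then (\ref{loc-e1}) to the $R$-cube $-J$, and finally taking the supremum over $\|g\|_{L^2(d\mu)}\leq1$, we obtain (\ref{loc-e2}). The main obstacle in the whole argument is not any individual estimate but this bookkeeping of the multiplicative per-scale losses $C_0(p)^k$ and $T_k^{-1/2}$; the content of the proof is the observation that $n_k\to\infty$ forces $k$ to be sub-logarithmic in $N_{k-1}$, which is precisely what lets us absorb those losses into $R^\epsilon$.
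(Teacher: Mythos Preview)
Your proof is correct and follows essentially the same approach as the paper: enclose the $R$-cube in an $N_k$-cube with $N_{k-1}\le R\le N_k$, apply Lemma \ref{random cantor measure}, use $T_k\ge c_0^{\,k}N_k^{2d/p}$ to reduce the constant to $(C_0(p)c_0^{-1/2})^k$, and then invoke (\ref{baby steps}) to absorb this into $R^\epsilon$. The paper simply writes ``$\lesssim_\epsilon R^\epsilon$ by (\ref{baby steps})'' at the last step, whereas you spell out explicitly why $n_k\to\infty$ forces $k=o(\log N_{k-1})$; this is a welcome clarification but not a different argument.
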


\begin{proof}
Suppose that $N_k< R\leq N_{k+1}$, and let $J'$ be an $N_{k+1}$-cube containing $J$.
By Lemma \ref{random cantor measure} and (\ref{baby steps}), we have
\begin{align*}
\|\widehat{gd\mu}\|_{L^p(J)} 
\leq \|\widehat{gd\mu}\|_{L^p(J')} 
&\lesssim 
C_0(p)^{k+1} N_{k+1}^{d/p}\, T_{k+1}^{-1/2} \|g\|_{L^2(d\mu)}\\
&\lesssim 
C_0(p)^{k+1} N_{k+1}^{d/p}\, (c_0^{k+1} N_{k+1}^{2d/p})^{-1/2} \|g\|_{L^2(d\mu)}\\
&\lesssim (C_0(p)c_0^{-1/2})^{k+1} \|g\|_{L^2(d\mu)}\\
&\lesssim_\epsilon R^\epsilon \|g\|_{L^2(d\mu)}
\end{align*}
as claimed.
The second part (\ref{loc-e2}) follows by duality.
\end{proof}


\section{Global restriction estimate}

\begin{proposition}\label{global-restriction} 
Assume that $n_k$ obey (\ref{baby steps}) and (\ref{baby epsilons}).
Suppose furthermore that $\widehat{\mu}$ obeys a pointwise decay estimate
\begin{equation}\label{mu-decay}
|\widehat{\mu}(x)|\lesssim (1+|x|)^{-\beta}
\end{equation}
for some $\beta>0$.
Then for any $q>p$ we have the estimate
\begin{equation}\label{global-e1}
\|\widehat{g d\mu}\|_{L^q(\RR)}\lesssim \|g\|_{L^2(d\mu)}.
\end{equation}
The implicit constant depends on the measure $\mu$
and on $q$, but not on $g$.
Equivalently, 
\begin{equation}\label{global-e2}
\|\widehat{f}\|_{L^2(d\mu)} \lesssim \|f\|_{L^{q'}(\RR)}.
\end{equation}
\end{proposition}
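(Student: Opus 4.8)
The plan is to deduce the global estimate (\ref{global-e1}) from the localized estimate (\ref{loc-e1}) of Corollary \ref{localized-restriction} by running Tao's epsilon-removal machinery, using the pointwise Fourier decay (\ref{mu-decay}) as the extra input that Corollary \ref{localized-restriction} alone does not provide. First I would fix $q>p$ and choose an intermediate exponent, say $p<p_1<q$, and pick $\epsilon>0$ small enough (depending on $q$, $p$, $p_1$, $\beta$, $d$) that the final bookkeeping closes; the point of working with $p_1$ rather than $p$ is to have a little room to absorb the $R^\epsilon$ losses. The localized estimate gives, for every $R\geq n_1$ and every $R$-cube $J$, the bound $\|\widehat{gd\mu}\|_{L^{p}(J)}\leq C_\epsilon R^\epsilon\|g\|_{L^2(d\mu)}$; by H\"older on the cube $J$ this upgrades (with a loss of a power of $|J|^{1/p-1/p_1}\sim R^{d(1/p-1/p_1)}$, absorbed into the $R^\epsilon$ budget only if we instead keep $p_1$ fixed and interpolate the other way — more precisely, one keeps the $L^p$ local estimate and feeds it into the epsilon-removal lemma directly) to the hypothesis needed by epsilon removal.

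The core step is the epsilon-removal lemma in the form suited to fractal measures (cf. \cite{tao-BR1999}, and the fractal adaptations used in \cite{BS}, \cite{chen-seeger2015}): if $\|\widehat{gd\mu}\|_{L^p(B(0,R))}\leq C_\epsilon R^\epsilon\|g\|_{L^2(d\mu)}$ for all $R\geq 1$ and all $\epsilon>0$, and if in addition $\mu$ satisfies a ball condition $\mu(B(x,r))\lesssim r^\gamma$ with $\gamma$ close to $\alpha=2d/p$ — which we have from Lemma \ref{dimension} — and the Fourier decay (\ref{mu-decay}), then the global estimate $\|\widehat{gd\mu}\|_{L^q(\RR^d)}\lesssim\|g\|_{L^2(d\mu)}$ holds for every $q>p$. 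The mechanism is the standard one: decompose a large ball of radius $R$ into sparse families of unit-scale (or $\rho$-scale) pieces, on each of which the localized estimate applies with only an $R^\epsilon$ loss; the cross terms between well-separated pieces are controlled because $\widehat{g d\mu}$ restricted to one piece, convolved against a bump, decays rapidly away from that piece, and the total number of pieces is polynomial in $R$, so one can sum a geometric-type series once $q>p$ is bounded strictly away from $p$. The Fourier decay enters through the estimate on $\widehat{\mu}$ that bounds the operator norm of the "bad" far-interaction part; the ball condition enters in counting and in the single-scale $TT^*$ bound. Since all of these hypotheses are in hand, the proof is an invocation of (a fractal version of) a known lemma rather than a new argument, and I would cite it and indicate the verification of its hypotheses rather than reproduce the combinatorial decomposition.

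The main obstacle, and the only place requiring genuine care, is that the epsilon-removal lemma as stated in \cite{tao-BR1999} is written for the extension operator associated to a surface measure on a hypersurface with curvature, so one must check that the only structural features it uses are (i) the single-scale localized restriction estimate with $R^\epsilon$ loss, (ii) polynomial control on the number of $1$-separated cubes intersecting a large ball, and (iii) enough Fourier decay of $\widehat{\mu}$ to make the tail interactions negligible — none of which uses curvature per se. I would therefore state the epsilon-removal step as a lemma adapted to our setting (referring to \cite{BS}, \cite{chen-seeger2015} where analogous adaptations appear), verify that (\ref{loc-e1}), Lemma \ref{dimension}, and (\ref{mu-decay}) supply exactly inputs (i)--(iii), and conclude. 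Finally, the equivalence of (\ref{global-e1}) and (\ref{global-e2}) is the usual duality between $\|\widehat{gd\mu}\|_{L^q}\lesssim\|g\|_{L^2(d\mu)}$ and $\|\widehat{f}\|_{L^2(d\mu)}\lesssim\|f\|_{L^{q'}}$, obtained by pairing $\widehat{f}$ against $g\,d\mu$ and moving the Fourier transform, so that requires no additional work.
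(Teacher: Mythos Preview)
Your proposal is correct and matches the paper's approach: the proof is precisely Tao's epsilon-removal argument, with the localized estimate (\ref{loc-e1}), the ball condition from Lemma~\ref{dimension}, and the Fourier decay (\ref{mu-decay}) supplying the three required inputs. The only difference is one of detail rather than strategy: the paper states and proves in full the adapted sparse-support lemma (the analogue of Tao's Lemma~3.2, here for $f$ supported on a sparse union of $R$-cubes) rather than citing \cite{BS} or \cite{chen-seeger2015}, carrying out the interpolation between $r=1$ (using the ball condition) and $r=2$ (using a $T^*T$/Schur argument where (\ref{mu-decay}) kills the off-diagonal terms) explicitly.
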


To prove this, we adapt Tao's epsilon-removal argument, see \cite[Theorem 1.2]{tao-BR1999}. It suffices to prove Lemma \ref{epsilon-remove} below; once this is done, the proof of the proposition is completed exactly as in \cite{tao-BR1999}, with Lemma 
\ref{epsilon-remove} replacing Tao's Lemma 3.2.

\begin{lemma}\label{epsilon-remove}
Assume that $n_k,t_k,\mu$ are as in Theorem \ref{global-restriction}, and let $R>0$ be large enough. Suppose that $\{I_1,\dots,I_M\}$ is a {\em sparse} collection 
of $R$-cubes, in the sense that their centers $x_1,\dots,x_M$ are $R^B M^B$-separated for some large enough constant $B$
(depending on $\beta$).
Then for any $f$ supported on $\bigcup_{j=1}^M I_j$, we have
$$\|\widehat{f}\|_{L^2(d\mu)} \lesssim_\epsilon R^{C\epsilon} \|f\|_{L^{p'}(\RR)}.$$
Here and below, the constant $C$ in the exponent may depend on $B$, and may change from line to line, but is independent of $R,M,\epsilon$, or $f$. 
\end{lemma}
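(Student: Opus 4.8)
The plan is to follow Tao's epsilon-removal scheme: use the localized restriction estimate of Corollary \ref{localized-restriction} on each individual cube $I_j$, and then control the cross terms between different cubes using the pointwise Fourier decay \eqref{mu-decay} together with the sparseness hypothesis. Write $f = \sum_{j=1}^M f_j$ with $f_j = f\one_{I_j}$, and estimate $\|\widehat{f}\|_{L^2(d\mu)}^2 = \int |\widehat{f}|^2 d\mu$ by expanding the square into diagonal and off-diagonal contributions. The diagonal sum $\sum_j \|\widehat{f_j}\|_{L^2(d\mu)}^2$ is handled directly by \eqref{loc-e2}, which gives $\sum_j \|\widehat{f_j}\|_{L^2(d\mu)}^2 \lesssim_\epsilon R^{C\epsilon} \sum_j \|f_j\|_{L^{p'}(\RR)}^p{}^{2/p}$ — here one needs to pass between $\ell^2$ and $\ell^{p'}$ summation over $j$, which costs at most a power of $M$, hence a power of $R$ by the sparseness scaling $M \lesssim (\text{separation}/R)$, absorbed into $R^{C\epsilon}$ after optimizing $B$.

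For the off-diagonal terms, the key identity is that $\int \widehat{f_i}\,\overline{\widehat{f_j}}\,d\mu$ can be rewritten, via Parseval/Plancherel, as $\int (f_i * \widetilde{f_j})(x)\, \widehat{d\mu}(x)\,dx$ (up to conjugations), where $\widetilde{f_j}(x) = \overline{f_j(-x)}$; since $f_i$ is supported on $I_i$ and $f_j$ on $I_j$, the convolution $f_i * \widetilde{f_j}$ is supported in a ball of radius $O(R)$ around $x_i - x_j$, which has magnitude $\gtrsim R^B M^B$ by sparseness. On that support $|\widehat{d\mu}(x)| \lesssim (1+|x_i-x_j|)^{-\beta} \lesssim (R^B M^B)^{-\beta}$ for $i\neq j$. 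Thus each off-diagonal term is bounded by $(R^BM^B)^{-\beta}\|f_i\|_{L^1}\|f_j\|_{L^1} \lesssim (R^BM^B)^{-\beta} R^{2d} \|f_i\|_{L^{p'}}\|f_j\|_{L^{p'}}$ after Hölder on the cubes of volume $R^d$. Summing over all $i,j$ (there are $\leq M^2$ pairs) and using Cauchy--Schwarz gives a total off-diagonal bound of $M^2 (R^BM^B)^{-\beta} R^{2d} \big(\sum_j \|f_j\|_{L^{p'}}\big)^2 \lesssim M^{2+2/p'}(R^BM^B)^{-\beta}R^{2d} \|f\|_{L^{p'}}^2$ (the last step again absorbing a power of $M$ from the $\ell^1$-to-$\ell^2$ passage in $j$, using that the $I_j$ are disjoint). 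Choosing $B$ large enough as a function of $\beta$ and $d$ makes the factor $M^{C}(R^BM^B)^{-\beta}R^{2d}$ at most $1$ (indeed $\to 0$), so the off-diagonal contribution is dominated by the diagonal one.

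Combining the two estimates yields $\|\widehat{f}\|_{L^2(d\mu)}^2 \lesssim_\epsilon R^{C\epsilon}\|f\|_{L^{p'}(\RR)}^2$, which is the claim. The main obstacle — and the only place requiring care — is bookkeeping the various powers of $M$ that arise whenever one switches between $\ell^1$, $\ell^2$, and $\ell^{p'}$ norms in the index $j$, and checking that all of them are genuinely polynomial in $R$ (not in $M$ independently), so that the single exponent $B$ can be chosen uniformly to beat the decay rate $\beta$; this is exactly the point where the sparseness scaling $R^BM^B$ is calibrated, and it is the reason $B$ is allowed to depend on $\beta$. A minor secondary point is that $\widehat{d\mu}$ must be evaluated at points of the form $x_i-x_j+O(R)$ rather than exactly at $x_i-x_j$, but since the separation dominates $R$ this only changes the constant in \eqref{mu-decay} by a bounded factor. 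Everything else is a routine repetition of the argument in \cite{tao-BR1999}, with Corollary \ref{localized-restriction} substituted for Tao's single-scale estimate and \eqref{mu-decay} substituted for the decay of the surface measure.
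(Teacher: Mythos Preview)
Your approach is correct and in fact more direct than the paper's own proof, but one step of your explanation is wrong and should be repaired.

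The paper follows Tao's scheme more literally: it introduces bump functions $\phi_i$, writes $\widehat f|_E=\sum_i\widehat{\phi_i}*(\widehat{f_i}\,\one_{E_k})$, and proves by interpolation between $r=1$ and $r=2$ the intermediate claim
\[
\Big\|\sum_i F_i*\widehat{\phi_i}\Big\|_{L^2(\mu)}^r\lesssim_\epsilon R^{C\epsilon}|E_k|^{-r/2}\sum_i\|F_i\|_{L^2}^r,
\]
the case $r=2$ being a $T^*T$ argument with Schur's test (the diagonal via Corollary~\ref{localized-restriction}, the off-diagonal via the kernel $\widehat\mu(x-y)$). Applying this with $r=p'$ and $F_i=\widehat{f_i}\,\one_{E_k}$ produces the $\ell^{p'}$ sum over $j$ directly. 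Your proposal instead expands $\|\widehat f\|_{L^2(\mu)}^2$ and treats diagonal and off-diagonal terms by hand; your off-diagonal identity $\int\widehat{f_i}\,\overline{\widehat{f_j}}\,d\mu=\int (f_i*\widetilde{f_j})\,\widehat\mu$ together with (\ref{mu-decay}) on the support $I_i-I_j$ is essentially the paper's $i\neq j$ Schur-test step without the $T^*T$ packaging. This is a legitimate and more economical route.

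The point that needs fixing is your diagonal bookkeeping. You say that passing from $\sum_j\|f_j\|_{L^{p'}}^2$ to $\|f\|_{L^{p'}}^2$ ``costs at most a power of $M$, hence a power of $R$ by the sparseness scaling $M\lesssim(\hbox{separation}/R)$, absorbed into $R^{C\epsilon}$.'' This reasoning is incorrect: the sparseness hypothesis gives no upper bound on $M$ in terms of $R$, so a genuine power of $M$ here could not be absorbed into $R^{C\epsilon}$. Fortunately no such power arises. Since $p>2$ we have $p'<2$, hence $\|a\|_{\ell^2}\le\|a\|_{\ell^{p'}}$ for any sequence, and therefore
\[
\sum_j\|f_j\|_{L^{p'}}^2\le\Big(\sum_j\|f_j\|_{L^{p'}}^{p'}\Big)^{2/p'}=\|f\|_{L^{p'}}^2
\]
with no loss at all. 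With this correction your argument is complete. (In the off-diagonal you correctly absorb the $M$-powers into $(R^BM^B)^{-\beta}$, which is the right mechanism; note also that H\"older on an $R$-cube gives $\|f_i\|_{L^1}\le R^{d/p}\|f_i\|_{L^{p'}}$, so the factor is $R^{2d/p}$ rather than $R^{2d}$, but this only helps.)
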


\begin{proof} We follow the outline of Tao's argument, with modifications necessary to adapt it to our setting.
We first note the following estimate: if $f$ is supported in an $R$-cube $J$ with $R\leq N_\ell$, then
\begin{equation}\label{loc-e3}
\|\widehat{f}\|_{L^2(d\mu_\ell)} \lesssim_\epsilon R^\epsilon \|f\|_{L^{p'}(J)}.
\end{equation}
The implicit constant depends on 
$\epsilon$, but not on $f$, $R$ or $J$. This is proved as in Lemma \ref{random cantor measure} and Corollary \ref{localized-restriction}, except that we do not take the limit $\ell\to\infty$ in the proof of Lemma \ref{random cantor measure}.

Let $k\in\NN$ be such that $N_k\leq R < N_{k+1}$. We have $|E_k|=T_k N_k^{-d}$; by (\ref{t vs n}), this implies that
\begin{equation}\label{range-R}
R^{\frac{2d}{p}-d-\epsilon } \lesssim_\epsilon |E_k| \lesssim_\epsilon R^{\frac{2d}{p}-d+\epsilon}.
\end{equation}
Let $f=\sum f_i \phi_i$, where supp$\, f_i\subset I_i$ and $\phi_i=\phi_{I_i}$ for a fixed Schwartz function $\phi$ such that 
$\phi\geq 0$, $\phi\geq 1$ on $[-1,1]^d$, and supp$\,\widehat{\phi}\subset [-1,1]^d$. Note that $\widehat{\phi_i}(x)=R^d \widehat{\phi}(Rx)$, and in particular $\widehat{\phi_i}$ is supported in $[-R^{-1}, R^{-1}]^d$. Then
$$
\widehat{f} = \sum \widehat{\phi_i f_i } = \sum \widehat{\phi_i}* \widehat{ f_i}.
$$
By the support properties of $\widehat{\phi_i}$, for $x\in E$ we actually have
$$
\widehat{f} (x) = \sum \widehat{\phi_i}* (\widehat{ f_i}\, \one_{E_k}) (x),
$$
where we abuse the notation slightly and use $E_k$ to denote both the $k$-th stage set from the Cantor iteration and a $CN_k^{-1}$-neighbourhood of $E$. This is harmless since either set can be covered by a bounded number of translates of the other.

We claim that the following holds: for all $r\in [1,2]$, and for any collection of functions $F_1,\dots,F_M\in L^2(\RR^d)$, we have
\begin{equation}\label{claim-r}
\Big\| \sum_i F_i * \widehat{\phi_i} \Big\|_{L^2(\mu)}^r
\lesssim_\epsilon R^{C\epsilon}  |E_k|^{-r/2} \sum_i \|F_i \|_{L^2(\RR^d)}^r.
\end{equation}

Assuming the claim (\ref{claim-r}), we complete the proof of the lemma as follows. Let $F_i=\widehat{f_i}\,\one_{E_k}$, and
observe that 
$$
|E_k|^{-r/2} \|F_i \|_{L^2(\RR^d)}^r = \|\widehat{f_i} \|_{L^2(\mu_k)}^r.
$$
Applying (\ref{claim-r}) to $F_i$ with $r=p'$, and then using (\ref{loc-e3}), we get
\begin{align*}
\|\widehat{f}\|_{L^2(d\mu)}^{p'}
=\Big\| \sum_i F_i * \widehat{\phi_i} \Big\|_{L^2(\mu)}^{p'}
&\lesssim_\epsilon R^{C\epsilon}  |E_k|^{-p'/2} \sum_i \|F_i \|_{L^2(\RR^d)}^{p'}\\
&\lesssim_\epsilon R^{C\epsilon}  \sum_i \|\widehat{f_i} \|_{L^2(\mu_k)}^{p'}\\
&\lesssim_\epsilon R^{C\epsilon}  \sum_i \|f_i \|_{L^{p'}(\RR^d)}^{p'}\\
&\approx R^{C\epsilon}  \| f \|_{L^{p'}(\RR^d)}^{p'}
\end{align*}
as required. 

It remains to prove (\ref{claim-r}). We will do so by interpolating between $r=1$ and $r=2$. For $r=1$, it suffices to prove that
for each $i$, 
\begin{equation}\label{claim-r1}
\| F_i * \widehat{\phi_i} \|_{L^2(\mu)}
\lesssim_\epsilon R^{C\epsilon}  |E_k|^{-1/2} \|F_i \|_{L^2(\RR^d)},
\end{equation}
since this implies (\ref{claim-r}) by triangle inequality. To prove (\ref{claim-r1}), we interpolate between $L^1$ and $L^\infty$ estimates. First, we have by Fubini's theorem
\begin{align*}
\| F_i * \widehat{\phi_i} \|_{L^1(\mu)}&\leq \iint |F_i(x-y)|\, |\widehat{\phi_i}(y)| dy\, d\mu(x)
\\
&= \int |F_i(u)|\, \left( \int |\widehat{\phi_i}(v-u)| d\mu(v) \right) du
\\
&\lesssim R^d \sup_{x} \mu(x+[-R^{-1}, R^{-1}]^d)
 \|F_i\|_{L^1(\RR^d)}
\\
& \lesssim_\epsilon R^d R^{-\frac{2d}{p}+\epsilon} \|F_i\|_{L^1(\RR^d)}
\\
&\lesssim_\epsilon R^{C\epsilon}  |E_k|^{-1} \|F_i \|_{L^1(\RR^d)}
\end{align*}
where at the last step we used (\ref{range-R}). Interpolating this with the pointwise bound
$$
\sup_x |F_i * \widehat{\phi_i}(x)| \leq \|F_i\|_{L^\infty(\RR^d)} \, \|\widehat{\phi_i}\|_{L^1(\RR^d)} 
\lesssim  \|F_i\|_{L^\infty(\RR^d)}
$$
we get (\ref{claim-r1}).

To complete the argument, we need to prove (\ref{claim-r}) with $r=2$. Define the functions $g_i$ via $\widehat{g_i}=F_i$, so that 
$\|g_i\|_{L^2(\RR)}= \|F_i\|_{L^2(\RR)}$ and $F_i * \widehat{\phi_i} =\widehat{g_i\phi_i}$. We thus need to prove that 
\begin{equation}\label{claim-r2}
\Big\| \sum_i \widehat{g_i \phi_i} \Big\|_{L^2(\mu)}^2
\lesssim_\epsilon R^{C\epsilon}  |E_k|^{-1} \sum_i \|g_i \|_{L^2(\RR^d)}^2.
\end{equation}

By translational invariance and the rapid decay of $ \widehat{\phi_i}$, it suffices to prove (\ref{claim-r2}) with $ \widehat{\phi_i}$ replaced by $\one_{B_i}$. 
Let $\mathcal{R}$ be the operator $\mathcal{R}(h)= \widehat{h}|_{E}$. By Corollary \ref{localized-restriction}, $\mathcal{R}$ is a bounded operator from  
$L^{p'}(J)$ to $L^2(\mu)$ for any bounded cube $J$ (with norm depending on $J$). We have to prove that
\begin{equation*}
\Big\| \mathcal{R}(\sum_i g_i \one_{I_i} )\Big\|_{L^2(\mu)}
\lesssim_\epsilon R^{C\epsilon}  |E_k|^{-1/2} \Big( \sum_i \|g_i \|^2_{L^2(\RR^d)}\Big)^{1/2}
\end{equation*}
By the $T^*T$ argument, it suffices to prove that
\begin{equation*}
\left( \sum_i 
\left\| \one_{I_i} \mathcal{R}^* \mathcal{R} \Big(\sum_j g_j \one_{I_j} \Big)\right\|_{L^2(\RR^d)}^2
\right)^{1/2}
\lesssim_\epsilon R^{C\epsilon}  |E_k|^{-1} \Big( \sum_i \|g_i \|^2_{L^2(\RR^d)}\Big)^{1/2}
\end{equation*}
By Schur's test, this follows from
\begin{equation}\label{claim-r99}
\sup_j \sum_i 
\left\| \one_{I_i} \mathcal{R}^* \mathcal{R} \one_{I_j} h \right\|_{L^2(\RR^d)}
\lesssim_\epsilon R^{C\epsilon}  |E_k|^{-1}  \|h \|_{L^2(\RR^d)}.
\end{equation}
We claim that
\begin{equation}\label{claim-i=j}
\left\| \one_{I_i} \mathcal{R}^* \mathcal{R} \one_{I_i} h \right\|_{L^2(\RR^d)}
\lesssim_\epsilon R^{C\epsilon}  |E_k|^{-1}  \|h \|_{L^2(\RR^d)}
\end{equation}
and
\begin{equation}\label{claim-i not j}
\left\| \one_{I_i} \mathcal{R}^* \mathcal{R} \one_{I_j} h \right\|_{L^2(\RR^d)}
\lesssim  M^{-2}  \|h \|_{L^2(\RR^d)},\ \ i\neq j,
\end{equation}
with constants independent of $i,j$.
Together, these two imply (\ref{claim-r99}).

We first prove (\ref{claim-i=j}), By Lemma \ref{localized-restriction}, H\"older's inequality, and by  (\ref{range-R}), we have
\begin{align*}
\left\|  \mathcal{R} \one_{I_i} h \right\|_{L^2(d\mu)}
&\lesssim_\epsilon R^{\epsilon} \| \one_{I_i} h \|_{L^{p'}(\RR^d)}\\
&\lesssim_\epsilon R^{C\epsilon} R^{\frac{d}{2}-\frac{d}{p}} \|  h \|_{L^{2}(\RR^d)} \\
&\lesssim_\epsilon R^{C\epsilon}  |E_k|^{-1/2}  \|h \|_{L^2(\RR^d)}.
\end{align*}
This implies (\ref{claim-i=j}) by the $T^*T$ argument with a fixed $i$.

For $i\neq j$, we note that $\mathcal{R}^* \mathcal{R}h = h*\widehat{\mu}$, so that 
$\one_{I_i} \mathcal{R}^* \mathcal{R} \one_{I_j}$ is an integral operator with the kernel
$$
K_{ij}(x,y)=\one_{I_i}(x)\one_{I_j}(y) \widehat{\mu}(x-y).
$$
By (\ref{mu-decay}), $\int |K(x,y)|dy\lesssim |I_j| M^{-B\beta} R^{-B\beta}\lesssim M^{-2}$ if $B$ was chosen large enough depending on $\beta$. 
The claimed estimate (\ref{claim-i not j}) now follows from Schur's test.

\end{proof}

%
%


\section{Fourier decay}\label{Fourier decay}

To complete the proof of Theorem \ref{main-thm}, it now suffices to prove that the Cantor set in Section \ref{cantor set} can be constructed so that (\ref{baby steps}), (\ref{baby epsilons}), and (\ref{main-e2}) all hold. Since (\ref{main-e2}) implies (\ref{mu-decay}), the restriction estimate (\ref{restriction}) will follow from Proposition \ref{global-restriction}.

In all our intermediate results so far, it did not matter how the $\Lambda(p)$ alphabet sets $S_{k,a}$ were chosen,
as long as they had the prescribed cardinalities. Here, however, it is crucial to randomize the choice of $S_{k,a}$.

\begin{theorem}
Let $\{n_k\}_{k\in\NN}$ and $\{t_k\}_{k\in\NN}$ be two deterministic sets of integers such that 
(\ref{baby steps}), (\ref{baby epsilons}), (\ref{large t}) all hold, and that $\Sigma_k$ is non-empty for each $k$ (as provided by Bourgain's theorem). Let $\{\mu_k\}_{k\in\NN\cup\{0\}}$ be a sequence of random measures 
on $[0,1]^d$ such that:
\begin{itemize}
\item $\mu_0=\one_{[0,1]^d}$,
\item $\mu_1,\mu_2,\dots$ are constructed inductively via the iterative process described in Section \ref{cantor set},
\item for each $k\in\NN$, the sets $S_{k,a}$ are chosen randomly and independently from $\Sigma_k$, with probability distribution such that 
\begin{equation}\label{small expectations}
\mathbb{E}(\mu_{k}(x)|E_n)=\mu_{k-1}(x)\ \ \ \forall x\in[0,1]^d.
\end{equation}
\end{itemize}
Then the limiting Cantor measure $\mu$ almost surely obeys all conclusions of Theorem \ref{main-thm}. 
\end{theorem}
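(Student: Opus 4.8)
The plan is to reduce the statement to the Fourier decay bound (\ref{main-e2}) and then to establish that bound almost surely by the martingale-and-concentration argument of \cite{LP}, \cite{shmerkin-suomala2014}, adapted to the present construction. For the reduction: $\mu$ is a probability measure supported on $E_\infty\subseteq[0,1]^d$ by construction, and the facts that $E_\infty$ has Hausdorff dimension $\alpha$ and that $\mu$ obeys the ball condition (\ref{main-e1}) are \emph{deterministic} consequences of (\ref{baby steps}) and (\ref{baby epsilons}), proved in Lemma \ref{dimension} for \emph{any} admissible choice of the alphabets $S_{k,a}$. Moreover, once we know that $\widehat{\mu}$ satisfies \emph{some} polynomial decay estimate $|\widehat{\mu}(x)|\lesssim(1+|x|)^{-\beta}$ with $\beta>0$ --- this is exactly the hypothesis (\ref{mu-decay}) --- the restriction estimate (\ref{restriction}) (i.e. (\ref{global-e1}) for all exponents exceeding $2d/\alpha$) follows from Proposition \ref{global-restriction}. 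Thus it suffices to prove that, almost surely, (\ref{main-e2}) holds for every $\beta<\min(\alpha/2,1)$, and this also supplies the hypothesis of Proposition \ref{global-restriction}.

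To prove (\ref{main-e2}), let $\mathcal{G}_k$ be the $\sigma$-algebra generated by the choices $S_{j,a}$ with $j\le k$, so that $A_k,E_k,\mu_k$ are $\mathcal{G}_k$-measurable and, conditionally on $\mathcal{G}_{k-1}$, the sets $\{S_{k,a}\}_{a\in A_{k-1}}$ are independent. Since $\mu_k\to\mu$ weakly and all $\mu_k$ are supported in $[0,1]^d$, for each fixed $\xi$ we have $\widehat{\mu}(\xi)=\widehat{\mu_0}(\xi)+\sum_{k\ge1}D_k(\xi)$ with $D_k(\xi):=\widehat{\mu_k}(\xi)-\widehat{\mu_{k-1}}(\xi)$; by (\ref{small expectations}) and Fubini, $\mathbb{E}[D_k(\xi)\mid\mathcal{G}_{k-1}]=0$. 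Using the explicit formula
$$
\widehat{\mu_k}(\xi)=\widehat{\one_{[0,1]^d}}(\xi/N_k)\,\frac{1}{T_k}\sum_{a\in A_k}e^{-2\pi i a\cdot\xi}
$$
together with the elementary bound $|\widehat{\one_{[0,1]^d}}(\eta)|\lesssim\min(1,|\eta|_\infty^{-1})$, one checks that
$$
D_k(\xi)=\sum_{a\in A_{k-1}}\Bigl(\widehat{\mu_k\one_{\tau_{k-1,a}}}(\xi)-\widehat{\mu_{k-1}\one_{\tau_{k-1,a}}}(\xi)\Bigr)
$$
is, conditionally on $\mathcal{G}_{k-1}$, a sum of $T_{k-1}$ independent (the $a$-th term depends only on $S_{k,a}$) mean-zero complex random variables, each of modulus $\lesssim T_{k-1}^{-1}\min(1,N_k/|\xi|)$. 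Applying Hoeffding's inequality conditionally to real and imaginary parts and then removing the conditioning by the tower property gives, for all $\lambda>0$,
$$
\mathbb{P}\Bigl(|D_k(\xi)|>\lambda\,T_{k-1}^{-1/2}\min(1,N_k/|\xi|)\Bigr)\lesssim e^{-c\lambda^2}.
$$

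Next I would make this uniform in $\xi$ and $k$ and sum. For dyadic $R=2^n$, pick an $R^{-10}$-net $\Xi_R$ of $\{|\xi|\le R\}$ of cardinality $\lesssim R^{Cd}$; since $\widehat{\mu}$ (being $\widehat{g\,d\mu}$ with $g\equiv1$) is $O(1)$-Lipschitz, $\sup_{|\xi|\le R}|\widehat{\mu}(\xi)|\le\max_{\xi\in\Xi_R}|\widehat{\mu}(\xi)|+O(R^{-10})$. Applying the probability bound above with $\lambda=\lambda_{k,R}\approx\sqrt{k+\log R}$ (with a large implicit constant) and taking a union bound over $\xi\in\Xi_R$ and all $k\ge1$ yields a total failure probability $\lesssim R^{-100}$, which is summable in $n$; by Borel--Cantelli, almost surely for all large $n$ one has $|D_k(\xi)|\lesssim T_{k-1}^{-1/2}\min(1,N_k/|\xi|)\sqrt{k+\log R}$ for all $\xi\in\Xi_R$ and all $k$. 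It remains to sum over $k$. Fix $\xi$, put $R=|\xi|$, and let $m$ be defined by $N_{m-1}\le|\xi|<N_m$; by (\ref{baby epsilons}) we have $N_k\le C_\epsilon N_{k-1}^{1+\epsilon}$ (so $N_{m-1}\gtrsim_\epsilon R^{1-\epsilon}$), by (\ref{t vs n}) we have $T_{k-1}^{-1/2}\lesssim_\epsilon N_{k-1}^{-\alpha/2+\epsilon}$, and $N_k$ grows at least geometrically in $k$ (since $n_k\to\infty$). The terms with $k\ge m$ sum, by geometric decay with dominant term $k=m$, to $\lesssim_\epsilon R^{-\alpha/2+\epsilon}\sqrt{\log R}$; the terms with $k<m$ contribute $\lesssim_\epsilon R^{-1}\sum_{k<m}N_{k-1}^{1-\alpha/2+\epsilon}\sqrt{k+\log R}$, which is $\lesssim_\epsilon R^{-\alpha/2+\epsilon}\sqrt{\log R}$ if $\alpha\le2$ (dominant term $k=m-1$, using $N_{m-1}\le R$) and $\lesssim R^{-1}\sqrt{\log R}$ if $\alpha>2$ (geometric decay, dominant term $k=1$). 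Since also $|\widehat{\mu_0}(\xi)|\lesssim\min(1,|\xi|^{-1})$, we conclude $|\widehat{\mu}(\xi)|\lesssim_\epsilon(1+|\xi|)^{-\min(\alpha/2,1)+\epsilon}\sqrt{\log(2+|\xi|)}$ almost surely, which gives (\ref{main-e2}) for every $\beta<\min(\alpha/2,1)$, and with it all the remaining conclusions of Theorem \ref{main-thm}.

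The main obstacle is this final summation over generations: one must verify that the series $\sum_k T_{k-1}^{-1/2}\min(1,N_k/|\xi|)\sqrt{k+\log|\xi|}$ is genuinely governed by the single critical scale $k\approx m$ in every parameter regime (small versus large $\alpha$; the product structure of $\widehat{\one_{[0,1]^d}}$ for $d>1$, handled via the $|\eta|_\infty^{-1}$ bound), and one must balance the $\epsilon$-losses coming from (\ref{baby epsilons}) and (\ref{t vs n}) against the target exponent $\beta$. A secondary technical point is the union-bound bookkeeping: the concentration threshold $\lambda_{k,R}$ must be taken to grow like $\sqrt{k+\log R}$, so that the exponential bound survives both the polynomially large frequency net $\Xi_R$ and the sum over the infinitely many levels $k$.
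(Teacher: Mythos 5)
Your proposal is correct and follows the same structure as the paper's proof: reduce to Lemma~\ref{dimension} (deterministic dimension/ball condition), Proposition~\ref{global-restriction} (restriction from decay), and the Fourier decay bound (\ref{main-e2}). For that last step the paper simply cites \cite{LP} and \cite[Thm.~14.1]{shmerkin-suomala2014} as applying ``almost word for word,'' whereas you actually carry out the martingale decomposition, conditional Hoeffding bound, net/Borel--Cantelli argument, and summation over scales --- which is indeed the argument those references use, so this is a fleshed-out version of the same route rather than a different one.
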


An example of a random construction of $\mu_k$ that meets the condition (\ref{small expectations}) is as follows.
Choose $n_k$ and $t_k$ as indicated in the theorem (recall that for (\ref{baby epsilons}) to hold, it suffices to assume (\ref{slow growth})). 
For each $k\in\NN$, choose a $\Lambda(p)$ set $B_k\subset [n_k]^d$ such that $|B_k|=t_k\geq c_0 n_k^{2d/p}$ and
(\ref{lambda-p}) holds with $n=n_k$, for some $c_0,C(p)$ independent of $k$. Let
$$ 
\mathcal{B}_k=\{  B_{k,v}:\   v\in [n_k]^d\},\ \  
B_{k,v}\subset[n_k]^d,\ 
B_{k,v}= v+B_k \mod (n_k\ZZ)^d  $$
Then $\mathcal{B}_k\subset \Sigma_k(n_k,t_k,c_0,2^dC(p))$, since any function supported on $B_{k,v}$ is a sum of at most $2^d$ functions supported on translates of $B_k$. 

Set $A_0=\{0\}$. Let now $k\geq 1$, and assume that $A_{k-1}$ has been constructed. For each $a\in A_{k-1}$, choose
a random $v(k,a)\in[n_k]^d$ so that $\mathbb{P}(v(k,a)=v)=n_k^{-d}$ for each $v\in[n_k]^d$ and the choices are independent for different $a\in A_{k-1}$. Let $S_{k,a}=B_{k,v(a)}$, a ``random translate" of $B_k$, and continue the construction as in Section \ref{cantor set}. Then (\ref{small expectations}) holds by translational averaging, and all other assumptions of the theorem hold with $C(p)$ replaced by $2^d C(p)$.

Instead of using random translates of a single set $B_k$ for each $k$, we could choose a set $B_{k,a}\in
\Sigma_k$ for each $a\in A_{k-1}$, then let $S_{k,a}= B(k,a)+v(k,a) \mod (n_k\ZZ)^d$, where $v(k,a)$ is a random translation vector in $[n_k]^d$ as above, chosen independently of $B_{k,a}$ and independently of the choices made for all other $a$. Bourgain's theorem \cite{bourg-lambdaP} shows that a generic subset of $[N]^d$ of size about $N^{-2d/p}$
is a $\Lambda(p)$ set, so that $\Sigma_k$ (with an appropriate choice of $c_0$ and $C(p)$) should be large for most values of $t_j$ in the indicated range, providing many sets available for the construction.
Other variants are possible.

We now turn to the proof of the theorem.

\begin{proof}
By Lemma \ref{dimension}, $E_\infty=\hbox{supp\,}\mu$ has Hausdorff dimension $\alpha$, and $\mu$ obeys (\ref{main-e1})
for all $0<\gamma<\alpha$. The Fourier decay estimate (\ref{main-e2}) is proved by a calculation almost identical to that in \cite[Section 6]{LP} for a special case in dimension 1, and in \cite[Theorem 14.1]{shmerkin-suomala2014} (see also
\cite[Theorem 4.2]{shmerkin-suomala2016}) for more general measures in higher dimensions. The proof in \cite[Theorem 14.1]{shmerkin-suomala2014} can be followed here almost word for word, except for the trivial changes in parameters to allow a variable sequence $\{n_j\}$ instead of a constant one (see e.g. \cite{chen}). It is easy to check that the proof goes through as long as
$$
\log N_{k+1}\lesssim_\epsilon  N_k^\epsilon,\ \ k\in\NN.
$$
Since $\log N_{k+1}=\log N_k +\log n_{k+1}\lesssim_\epsilon N_k^\epsilon + n_{k+1}^\epsilon$, this is a weaker condition than (\ref{baby epsilons}).

Finally, the restriction estimate (\ref{restriction}) holds by Proposition \ref{global-restriction} and by (\ref{main-e2}).

\end{proof}

\vskip.5in

\section{Acknowledgements}
This work was started while the first author was visiting the Institute for Computational and Experimental Research in Mathematics (ICERM).
The first author was supported by the NSERC Discovery Grant 22R80520. 
We would like to thank Laura Cladek, Semyon Dyatlov, Larry Guth, Mark Lewko, Pablo Shmerkin and Josh Zahl for helpful conversations.


\bigskip

\noindent{\sc Department of Mathematics, UBC, Vancouver,
B.C. V6T 1Z2, Canada}

\smallskip

\noindent{\it  ilaba@math.ubc.ca}

\medskip

\noindent{\sc Department of Mathematics, MIT, Cambridge, MA 02139, USA}

\smallskip

\noindent{\it  hongwang@mit.edu}


\begin{thebibliography}{99}



\bibitem{BS} J.-G. Bak, A. Seeger, \textit{Extensions of the Stein-Tomas theorem}, Math. Res. Lett. \textbf{18} (2011), no. 4, 767--781. 

\bibitem{chen2} X. Chen, {\it A Fourier restriction theorem based on convolution powers}, Proc. Amer. Math. Soc. 142 (2014), 3897--3901.


\bibitem{chen} X. Chen, {\it Sets of Salem type and sharpness of the $L^2$-Fourier restriction theorem}, Trans. Amer. Math. Soc. 368 (2016), 1959--1977.

\bibitem{chen-seeger2015} X. Chen, A. Seeger, \textit{Convolution powers of Salem measures with applications}, preprint, 2015,
{http://xxx.lanl.gov/abs/1509.00460}

\bibitem{bourg-lambdaP} J. Bourgain, \textit{Bounded orthogonal systems and the $\Lambda(p)$-set problem}, Acta Math. 162 (1989), 227--245.

\bibitem{BD2015} J. Bourgain, C. Demeter, \textit{The proof of the $\ell^2$ decoupling conjecture}, Ann. Math. 182 (2015), 351--389.


\bibitem{BD-expo} J. Bourgain, C. Demeter, \textit{A study guide for the $\ell^2$ decoupling theorem}, preprint, 2016


\bibitem{BG} J. Bourgain, L. Guth, \textit{Bounds on oscillatory integral operators based on multilinear estimates}, Geom. Funct. Anal. 21 (2011), 1239--1295.

\bibitem{guth-poly1} L. Guth, \textit{Restriction estimates using polynomial partitioning}, J. Amer. Math. Soc. 29 (2016), 371--413.

\bibitem{guth-poly2} L. Guth, \textit{Restriction estimates using polynomial partitioning II}, preprint, 2016, http://xxx.lanl.gov/abs/1603.04250




\bibitem{hambrook-thesis} K.Hambrook, \textit{Restriction theorems and Salem sets}, Ph.D. thesis, University of British Columbia (2015).

\bibitem{HL} K. Hambrook, I. {\L}aba, \textit{On the sharpness of Mockenhaupt's restriction theorem}, Geom. Funct. Anal. \textbf{23} (2013), no. 4, 1262–-1277.

\bibitem{HL2} K. Hambrook, I. {\L}aba, \textit{Sharpness of the Mockenhaupt-Mitsis-Bak-Seeger Restriction Theorem in Higher Dimensions},
to appear in Bull. London Math. Soc. 



\bibitem{LP} I. {\L}aba, M. Pramanik, \textit{Arithmetic progressions in sets of fractional dimension}, Geom. Funct. Anal. \textbf{19} (2009), no. 2, 429--456.




\bibitem{mit} T. Mitsis, {\it A Stein-Tomas restriction theorem for general measures}, Publ. Math. Debrecen \textbf{60} (2002), 89--99.


\bibitem{mock} G. Mockenhaupt, \textit{Salem sets and restriction properties of
Fourier transforms}, Geom. Funct. Anal. \textbf{10} (2000), 1579--1587.







\bibitem{shmerkin-suomala2014} P. Shmerkin, V. Suomala, \textit{Spatially independent martingales, intersections, and applications}, 
Memoirs of the Amer. Math. Soc. to appear, {http://lanl.arxiv.org/abs/1409.6707}

\bibitem{shmerkin-suomala2016} P. Shmerkin, V. Suomala, \textit{A class of random Cantor measures, with applications},
preprint, 2016, {http://lanl.arxiv.org/abs/1603.0815}


\bibitem{stein-ha} E. M.~Stein, \textit{Harmonic Analysis}, Princeton Univ. Press, 1993.




\bibitem{talagrand} M. Talagrand, \textit{Sections of smooth convex bodies via majoring measures}, Acta Math. 175 (1995), 273--300.


\bibitem{tao-BR1999} T. Tao, \textit{The Bochner-Riesz conjecture implies the restriction conjecture}, Duke Math. J. 96 (1999), 363--375.


\bibitem{tao-survey} T. Tao, \textit{Some Recent Progress on the Restriction Conjecture}, Fourier Analysis and Convexity, 217-243, Appl. Numer. Harmon. Anal., Birkhuser Boston, Boston, MA, 2004.

\bibitem{tao-survey2} T. Tao, \textit{Recent progress on the restriction conjecture}, preprint, 2003, http://lanl.arxiv.org/abs/math/0311181


\bibitem{tomas-1} P. A. Tomas, \textit{A restriction theorem for the Fourier transform}, Bull. Amer. Math. Soc. \textbf{81} (1975), 477--478.

\bibitem{tomas-2} P. A. Tomas, \textit{Restriction theorems for the Fourier transform}, in \textit{Harmonic Analysis in Euclidean Spaces} (Proc. Sympos. Pure Math. \textbf{35}, Amer. Math. Soc., 1979, vol I), 111--114.




\end{thebibliography}
\end{document}